\numberwithin{equation}{section}
  \newtheorem{theorem}{Theorem}[section]
  \newtheorem{proposition}[theorem]{Proposition}
  \newtheorem{lemma}[theorem]{Lemma}
  \newtheorem{corollary}[theorem]{Corollary}
  \newtheorem{definition}[theorem]{Definition}
  \newtheorem{example}[theorem]{Example}
\title[A note on quasi generalized CR-lightlike geometry ]{A note on quasi generalized CR-lightlike geometry in indefinite nearly $\mu$-Sasakian manifold}
\author[Fortun\'{e} Massamba, Samuel Ssekajja ]{Fortun\'{e} Massamba*, Samuel Ssekajja**}
\newcommand{\acr}{\newline\indent}
\address{\llap{*\,} School of Mathematics, Statistics and Computer Science\acr
 University of KwaZulu-Natal\acr
 Private Bag X01, Scottsville 3209\acr
South Africa}
\email{massfort@yahoo.fr, Massamba@ukzn.ac.za} 
\thanks{}
\address{\llap{**\,} School of Mathematics, Statistics and Computer Science\acr
 University of KwaZulu-Natal\acr
 Private Bag X01, Scottsville 3209\acr
South Africa}
\email{ssekajja.samuel.buwaga@aims-senegal.org} 
\thanks{}
\subjclass[2010]{Primary 53C25; Secondary 53C40, 53C50}
\keywords{QGCR-lightlike submanifold; Nearly cosympletic manifold; minimal submanifold; ascreen and co-screen submanifolds.}
\begin{document}
 
\begin{abstract} 
The concept of quasi generalized CR-lightlike was first introduced by the authors in \cite{ms}. In this paper, we focus on ascreen and co-screen quasi generalized CR-lightlike submanifolds of indefinite nearly $\mu$-Sasakian manifold. We prove an existence theorem for minimal ascreen quasi generalized CR-lightlike submanifolds admitting a metric connection. Classification theorems on nearly parallel and auto-parallel distributions on a co-screen quasi generalized CR-lightlike submanifold are also given. Several examples are also constructed, where necessary, to illustrate the main ideas.
\end{abstract}

\maketitle

\section{Introduction}  

  Lightlike geometry is widely applied in mathematical physics, particularly in general relativity and electromagnetism (see \cite{db,ds2} and references therein). The theory of lightlike submanifolds was introduced, in 1996, by Bejancu-Duggal \cite{db} and later adopted by many researchers, for instance \cite{dj}, \cite{ds2}, \cite{ds3}, \cite{ds4}, \cite{ma1}, \cite{ma2} and \cite{ma22}. In \cite{ds3}, Duggal-Sahin studied generalized CR (GCR) lightlike submanifolds of an indefinite Sasakian manifold. In their paper, the structure vector field, $\xi$, of the almost contact structure $(\overline{\phi},\xi,\eta)$ was assumed to be tangent to the GCR-lightlike submanifold. Moreover, when $\xi$ is tangent to the submanifold in Sasakian case, Calin \cite{ca} proved that it belongs to the screen distribution. This assumption is widely accepted and it has been applied in many papers on contact lightlike geometry (see \cite{ds1}, \cite{ds2}, \cite{ds3}, \cite{ma1} and other references therein). 
  
  One of the motivations for choosing $\xi$ to be tangent to the CR-submanifold, by many authors, is the  book \cite[p. 48]{yano} by Yano-Kon, in which they showed that any Riemannian submanifold of Sasakian manifold which is normal to the structure vector field, $\xi$, is actually anti-invariant. More precisely, if $A_{\xi}$ (with $\xi$ in the normal bundle) and $g$ denotes the shape operator and the induced metric tensor, respectively, of a Riemannian submanifold of a Sasakian manifold, then one can easily verify that $g(A_{\xi}X,Y)=g(\overline{\phi}X,Y)$, for any tangent vector fields $X$ and $Y$. Since $A_{\xi}$ is symmetric while $\overline{\phi}$ is anti-symmetric with respect to $g$, then one gets $A_{\xi}=0$ and consequently $\overline{\phi}X$ is a normal vector field for any tangent vector $X$. This indicates that the tangent bundle of such submanifold is carried into its normal bundle under the action of $\overline{\phi}$. 
  
  On contrary, the normal bundle of a lightlike submanifold intersect its tangent bundle and, also, its shape operators  are  generally not symmetric with respect to the induced metric tensor and thus one may, if needed, consider some \textit{special} non-tangential almost contact CR-lightlike submanifolds. 
  
 Restricting $\xi$ to the screen distribution comes with immediate benefits, including simplicity and maintenance of the classical contact CR-structures as introduced in Riemannian CR-submanifolds. However, one drawback with this choice is the narrowing of research to only such CR-submanifolds and leaving out other special cases which one gets if $\xi$ is kept general. In a different view,  we introduced a new class of CR-lightlike submanifold of a nearly Sasakian and nearly cosymplectic manifold, known as \textit{quasi generalised CR (QGCR)} lightlike submanifold (see \cite{ms} for details) in which $\xi$ was not necessarily tangent to the submanifold. We proved that any QGCR-lightlike submanifold tangent to the structure vector field is, in fact, a GCR-lightlike submanifold \cite{ds3}. It is important to note that generalizing $\xi$ gives a submanifold (for instance, QGCR-lightlike submanifold) which is lower in dimension to a comparable GCR-lightlike submanifold. For instance, one requires a 7-dimensional submanifold of an 11-dimensional almost contact ambient manifold in order to achieve a 3-lightlike proper ascreen QGCR-lightlike submanifold (see  \cite{ms} for other details), while a 3-lightlike proper GCR submanifold is 9-dimensional and comes from a 13-dimensional almost contact ambient manifold (see \cite{ds3}). Further still, under similar conditions, a 13-dimensional almost contact ambient manifold gives an 8-dimensional 3-lightlike co-screen QGCR submanifold and a 9-dimensional 3-lightlike GCR submanifold (see Example \ref{exa11}).  

The aim of this paper is to study minimal ascreen and co-screen QGCR-lightlike submanifolds of indefinite nearly $\mu$-Sasakian manifold. We prove an existence theorem for irrotational minimal ascreen QGCR-lightlike submanifold, admitting a metric connection and also discuss the geometry of distributions on co-screen QGCR-lightlike submanifold. Also, several examples have been constructed to illustrate the ideas. 

The paper is organized as follows. In Section \ref{prel}, we present the basic notions of nearly $\mu$-Sasakian manifolds and lightlike submanifolds which we refer to in the remaining sections. For extended details on nearly Sasakian and nearly $\mu$-Sasakian manifolds refer to \cite{mo}, \cite{bl1}, \cite{bl2}, \cite{endo} and \cite{diaz}.  In Section \ref{QGCR}, we review the basic notions of QGCR-lightlike submanifolds. In Section \ref{minimal}, we study minimal QGCR-lightlike submanifolds. Finally, in section \ref{Co} we introduce co-screen QGCR-lightlike submanifolds and study the geometry of its distributions.

\section{Preliminaries}\label{prel}
 
Let $\overline{M}^{2n+1}$ be an odd dimensional manifold equipped with an almost contact structure $(\overline{\phi}, \xi, \eta)$, i.e., $\overline{\phi}$ is a tensor field of type $(1, 1)$, $\xi$ is a vector field, and $\eta$ is a 1-form satisfying
\begin{equation}\label{equa1}
\overline{\phi}^{2} = -\mathbb{I} + \eta \otimes\xi,\;\;\eta(\xi)= 1 ,\;\;\eta\circ\overline{\phi} =
0 \;\;\mbox{and}\;\;\overline{\phi}(\xi) = 0.
\end{equation}
Then $(\overline{\phi}, \xi, \eta,\,\overline{g})$ is called an indefinite almost contact metric structure on $\overline{M}$ if $(\overline{\phi}, \xi, \eta)$ is an almost contact structure on $\overline{M}$ and $\overline{g}$ is a semi-Riemannian metric on $\overline{M}$ such that \cite{bl2}, for any vector fields $\overline{X}$, $\overline{Y}$ on  $\overline{M}$,
\begin{equation}\label{equa2}
 \overline{g}(\overline{\phi}\,\overline{X}, \overline{\phi}\,\overline{Y}) = \overline{g}(\overline{X}, \overline{Y}) -  \eta(\overline{X})\,\eta(\overline{Y}),\;\;\mbox{and}\;\;\eta(\overline{X}) =  \overline{g}(\xi,\overline{X}).
\end{equation}
Let $\overline{\nabla}$ denote the Levi-Civita connection on $\overline{M}$ for the semi-Riemannian metric $\overline{g}$. Then, $\overline{M}$ is called an \textit{indefinite nearly $\mu$-Sasakian manifold} if it satisfies
\begin{equation}\label{eqz}
         (\overline{\nabla}_{\overline{X}} \overline{\phi})\overline{Y}+(\overline{\nabla}_{\overline{Y}} \overline{\phi})\overline{X}=\mu\{2\overline{g}(\overline{X}, \overline{Y})\xi-\eta(\overline{Y})\overline{X}-\eta(\overline{X})\overline{Y}\},
    \end{equation}
for any vector fields $\overline{X}$, $\overline{Y}$ on $\overline{M}$.  Notice that when $\mu=0$ (resp. $\mu=1$) then $\overline{M}$ reduces to the known nearly cosymplectic (resp. nearly Sasakian) manifold.

Through out this paper, $\Gamma(\Xi)$ will denote the set of smooth sections of the vector bundle $\Xi$. 

By letting  $\overline{Y}=\xi$ in (\ref{eqz}) we obtain
   \begin{equation}
    \overline{\nabla}_{\overline{X}} \xi+\overline{\phi}(\overline{\nabla}_\xi \overline{\phi})\overline{X} =-\mu\overline{\phi}\,\overline{X},\label{eq10}
   \end{equation}
for any $\overline{X}\in \Gamma (T\overline{M})$. Let $\overline{H}$ be a (1,1)-tensor  on $\overline{M}$ taking
\begin{equation*}
 \overline{H}\,\overline{X}=\overline{\phi}(\overline{\nabla}_\xi \overline{\phi})\overline{X}, 
\end{equation*}
for any $\overline{X}\in \Gamma (T\overline{M})$, such that (\ref{eq10}) reduces to 
$
    \overline{\nabla}_{\overline{X}} \xi =-\mu\overline{\phi}\, \overline{X}-\overline{H}\,\overline{X}.
$

By a straightforward calculation, one can show that the linear operator $\overline{H}$ satisfies the following properties
 \begin{align}
  & \overline{H}\,\overline{\phi} + \overline{\phi}\,\overline{H}=0,\;\;\overline{H}\xi=0,\;\;\eta\circ \overline{H}=0, \nonumber\\
  \mbox{and}\;\;& \overline{g}(\overline{H}\,\overline{X}, \overline{Y})=-\overline{g}(\overline{X}, \overline{H}\,\overline{Y})\;\;\;\; ( \overline{H} \;\;\mbox{is skew-symmetric}).
 \end{align}
Moreover,  $\overline{M}$ is $\mu$-Sasakian if and only if $\overline{H}$ vanishes identically on $\overline{M}$ (see \cite{bl2}).

Let $(\overline{M}^{m+n}$, be semi-Riemannian manifold of constant index $\nu$, $1\le \nu\le m+n$ and $M$ be a submanifold of $\overline{M}^{m}$ of codimension $n$, both $m,n\ge 1$. For any point $p\in M$, the orthogonal complement $T_{p} M^{\perp}$ of the tangent space $T_{p} M$ is given by
 $$
 T_{p} M^{\perp} = \{X\in T_{p} M: \overline{g}(X, Y)=0,\; \forall \, Y\in T_{p} M\}.
 $$
 Let  $\mathrm{Rad} \, T_{p} M = \mathrm{Rad}\, T_{p} M^{\perp} = T_{p} M \cap T_{p} M^{\perp}$. The submanifold $M$ of $\overline{M}$ is said to be $r$-lightlike submanifold (one supposes that the index of $\overline{M}$ is $q \ge r$), if the mapping 
 $
 \mathrm{Rad} \, T M: p\in M \longrightarrow\mathrm{Rad}\, T_{p} M 
 $
 defines a smooth distribution on $M$ of rank $r > 0$. We call $\mathrm{Rad}\,T M$ the radical distribution on $M$. In the sequel, an $r$-lightlike submanifold will simply be called a \textit{lightlike submanifold} and $g$ is \textit{lightlike metric}, unless we need to specify $r$.
 
 Let $S(T M)$ be a screen distribution which is a semi-Riemannian complementary distribution of $\mathrm{Rad}\,T M$ in $T M$, that is,
 \begin{equation}\label{eq05}
  T M = \mathrm{Rad}\,T M \perp S(T M).
 \end{equation} 
 Let us consider a screen transversal bundle $S(TM^\perp)$, which is semi-Riemannian and complementary to $\mathrm{Rad}\, TM$ in $TM^\perp$. Since, for any local basis $\{E_i \}$ of  $\mathrm{Rad}\,TM$, there exists a local null frame $\{N_i\}$ of sections with values in the orthogonal complement of $S(T M^\perp)$ in $S(T M )^\perp$  such that $g(E_i , N_j ) = \delta_{ij}$ and $\overline{g}(N_{i},N_{j})=0$. It follows that there exists a lightlike transversal vector bundle $l\mathrm{tr}(TM)$ locally spanned by $\{N_i\}$ \cite{db}. 

Let $\mathrm{tr}(TM)$ be complementary (but not orthogonal) vector bundle to $TM$ in $T\overline{M}$. Then, 
\begin{align}
          \mathrm{tr}(TM)  & =l\mathrm{tr}(TM)\perp S(TM^\perp),\label{eq08}\\
  T\overline{M} &=  S(TM)\perp S(TM^\perp)\perp\{\mathrm{Rad}\, TM\oplus l\mathrm{tr}(TM)\}\label{eq04} .
\end{align}
Note that the distribution $S(TM)$ is not unique, and is canonically isomorphic to the factor vector bundle $TM/ \mathrm{Rad}\, TM$ given by Kupeli \cite{kup}. 
 
The following classification of  a lightlike submanifold $M$ of $\overline{M}$ are well-known \cite{db}: i). $M$ is $r$-null if $1\leq r< min\{m,n\}$; ii). $M$ is co-isotropic if $1\leq r=n<m$,  $S(TM^\perp)=\{0\}$; iii). $M$ is isotropic if $1\leq r=m<n$,  $S(TM)=\{0\}$; iv). $M$ is totally lightlike if $r=n=m$,  $S(TM)=S(TM^\perp)=\{0\}$.  

Consider a local quasi-orthonormal fields of frames of $\overline{M}$ along $M$ as 
\begin{equation*}
\{ E_1,\cdots, E_r,N_1,\cdots, N_r,X_{r+1},\cdots,X_{m},W_{1+r},\cdots, W_{n}\},
\end{equation*}
where
 $\{X_{r+1},\cdots,X_{m}\}$ and $\{W_{1+r},\ldots, W_n\}$ are respectively orthogonal bases of $\Gamma(S(TM)|_{U})$ and $\Gamma(S(TM^{\perp})|_{U})$  and that $\epsilon_l= g(X_{l},X_{l})$ and $\epsilon_\alpha=\overline{g}(W_\alpha,W_\alpha)$ be the signatures of $X_{l}$ and $W_\alpha$ respectively.
 
Let $P$ be the projection morphism of $TM$ on to $S(TM)$.  the following  Gauss-Weingartein equations  of an $r$-lightlike submanifold $M$ and $S(TM)$ are well-known \cite{ds2};
  \begin{align}
     & \overline{\nabla}_X Y=\nabla_X Y+\sum_{i=1}^r h_i^l(X,Y)N_i+\sum_{\alpha=r+1}^n h_\alpha^s(X,Y)W_\alpha,\label{eq11}\\
     & \overline{\nabla}_X N_i=-A_{N_i} X+\sum_{j=1}^r \tau_{ij}(X) N_j+\sum_{\alpha=r+1}^n \rho_{i\alpha}(X)W_\alpha,\label{eq31}\\
     & \overline{\nabla}_X W_\alpha=-A_{W_\alpha} X+\sum_{i=1}^r \varphi_{\alpha i}(X) N_i+\sum_{\beta=r+1}^n \sigma_{\alpha\beta}(X)W_\beta,\label{eq32}\\
     & \nabla_X P Y=\nabla_X^* PY+\sum_{i=1}^r h_i^*(X, P Y)E_i,\\
     & \nabla_X E_i=-A_{E_i}^* X-\sum_{j=1}^r \tau_{ji}(X) E_j,\;\;\;\; \forall\; X,Y\in \Gamma(TM)\label{eq50},
  \end{align}
where  $\nabla$ and $\nabla^*$ are the induced connections on $TM$ and $S(TM)$ respectively. Further, $h_i^l$ and $h_\alpha^s$ are symmetric bilinear forms known as \textit{local lightlike} and \textit{screen fundamental} forms of $TM$ respectively. Also $h_i^*$ are the \textit{ second fundamental forms} of $S(TM)$. $A_{N_i}$, $A_{E_i}^*$ and $A_{W_\alpha}$ are linear operators on $TM$ while $\tau_{ij}$, $\rho_{i\alpha}$, $\varphi_{\alpha i}$ and $\sigma_{\alpha\beta}$ are 1-forms on $TM$.  

Notice from (\ref{eq11}) that the second fundamental form $h$  of $M$ is given by
 \begin{equation}\label{h1}
 h(X,Y)=\sum_{i=1}^r h_i^l(X,Y)N_i+\sum_{\alpha=r+1}^n h_\alpha^s(X,Y)W_\alpha,
\end{equation}
for any $X,Y\in \Gamma(TM)$.  It is easy to see that  $\nabla^*$ is a metric connection on $S(TM)$, while $\nabla$ is generally not a metric connection and satisfies:
      \begin{equation}\label{metric}
         (\nabla_X g)(Y,Z)=\sum_{i=1}^r\{h_i^l(X,Y)\lambda_i(Z)+h_i^l(X,Z)\lambda_i(Y)\},
      \end{equation}
for any $X,Y\in \Gamma(TM)$ and $\lambda_i$ are 1-forms given by $\lambda_i(\cdot)=\overline{g}(\cdot,N_i)$.
     
The above three local second fundamental forms are related to their shape operators by the following equations.
      \begin{align} 
           g(A_{E_i}^*X,Y)&=h_i^l(X,Y)+\sum_{j=1}^rh_j^l(X,E_i)\lambda_j(Y), \;\; \overline{g}(A_{E_i}^*X,N_j)=0,\nonumber \\
          g(A_{W_\alpha}X,Y) &=\epsilon_\alpha h_\alpha^s(X,Y)+ \sum_{i=1}^r \varphi_{\alpha i}(X)\lambda_i(Y),\nonumber 
          \end{align}
           \begin{align}
          \overline{g}(A_{W_\alpha}X,N_i)& =\epsilon_\alpha \rho_{i\alpha}(X), \nonumber\\
          g(A_{N_i}X,Y)& =h_i^*(X,\mathcal{P}Y),\;\;\; \lambda_j(A_{N_i}X)+\lambda_i(A_{N_j}X)=0,\nonumber
      \end{align}
for any $X,Y\in \Gamma(TM)$.

\section{Quasi generalized CR-lightlike submanifolds}\label{QGCR}

The structure vector field $\xi$ is globally defined on $T\overline{M}$. Therefore, one can define it according to decomposition (\ref{eq08}) as follows;
         \begin{equation}\label{eq2}
              \xi=\xi_{S}+\sum_{i=1}^ra_i E_i+\sum_{i=1}^rb_i N_i+\sum_{\alpha=r+1}^nc_\alpha W_\alpha,
         \end{equation}
where $\xi_{S}$ is a smooth vector field of $S(TM)$ while $a_i=\eta(N_i)$, $b_i=\eta(E_i)$ and $c_\alpha=\epsilon_\alpha\eta(W_\alpha)$ all smooth functions on $\overline{M}$.

Now, we adopt the definition of quasi generalized CR (QGCR)-lightlike submanifolds given in \cite{ms} for indefinite nearly $\mu$-Sasakian manifolds.
\begin{definition} \label{def2}{\rm Let $(M,g,S(TM),S(TM^\perp))$  be a lightlike submanifold of an indefinite nearly $\mu$-Sasakian manifold $(\overline{M}, \overline{g})$. We say that $M$ is quasi generalized CR (QGCR)-lightlike submanifold of $\overline{M}$ if the following conditions are satisfied:
\begin{enumerate}
 \item [(i)] there exist two distributions $D_1$ and $D_2$ of $\textrm{Rad}\,TM$  such that 
        \begin{equation}\label{eq03}
             \mathrm{Rad}\, TM = D_1\oplus D_2, \;\;\overline{\phi} D_1=D_1, \;\;\overline{\phi} D_2\subset S(TM),
        \end{equation}
 \item [(ii)] there exist vector bundles $D_0$ and $\overline{D}$ over $S(TM)$ such that 
         \begin{align} 
              & S(TM)=\{\overline{\phi} D_2 \oplus \overline{D}\}\perp D_0,\\ 
              \mbox{with}\;\;\; &\overline{\phi}D_{0} \subseteq D_{0},\;\;   \overline{D}= \overline{\phi} \, \mathcal{S}\oplus \overline{\phi} \,\mathcal{L}, \label{s81}
         \end{align}    
\end{enumerate}
where $D_0$ is a non-degenerate distribution on $M$, $\mathcal{L}$ and $\mathcal{S}$ are respectively vector subbundles of $l\mathrm{tr}(TM)$  and $S(TM^{\perp})$.
}
\end{definition}

If $D_{1}\neq \{0\}$, $D_0\neq \{0\}$, $D_2\neq \{0\}$ and $\mathcal{S}\neq \{0\}$, then $M$ is called a \textit{proper QGCR-lightlike submanifold}. Through out this paper we shall suppose that $M$ is a proper QGCR-lightlike submanifold.

\begin{proposition}
 A QGCR-lightlike submanifold $M$ of an indefinite nearly $\mu$-Sasakian manifold $\overline{M}$ tangent to the structure vector field $\xi$ is a GCR-lightlike submanifold.
 \end{proposition}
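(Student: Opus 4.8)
The plan is to show that tangency of $\xi$ forces every radical and transversal component of $\xi$ in (\ref{eq2}) to vanish, so that $\xi$ in fact lies in the invariant screen distribution $D_0$; once this is done, Definition \ref{def2} reduces term-by-term to the defining conditions of a GCR-lightlike submanifold in the sense of \cite{ds3}. First I would note that, since $N_i\in l\mathrm{tr}(TM)$ and $W_\alpha\in S(TM^\perp)$ are transversal to $M$, the hypothesis $\xi\in\Gamma(TM)$ immediately annihilates the corresponding coefficients in (\ref{eq2}), giving $b_i=\eta(E_i)=0$ and $c_\alpha=\epsilon_\alpha\eta(W_\alpha)=0$. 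Hence $\xi=\xi_S+V$ with $\xi_S\in\Gamma(S(TM))$ and $V=\sum_i a_iE_i\in\Gamma(\mathrm{Rad}\,TM)$, and in particular $\eta$ vanishes on $\mathrm{Rad}\,TM$, on $l\mathrm{tr}(TM)$ and on $S(TM^\perp)$.

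The heart of the argument is to prove $V=0$. Applying $\overline{\phi}$ to $\xi=\xi_S+V$ and using $\overline{\phi}\xi=0$ from (\ref{equa1}), I would split $V=V_1+V_2$ with $V_1\in\Gamma(D_1)$ and $V_2\in\Gamma(D_2)$. Since $\overline{\phi}D_1=D_1$ and $\overline{\phi}D_2\subseteq S(TM)$, the identity $\overline{\phi}\xi_S=-\overline{\phi}V=-\overline{\phi}V_1-\overline{\phi}V_2$ shows that $\overline{\phi}\xi_S$ lies in $D_1\oplus\overline{\phi}D_2$; in particular it has no component in $S(TM^\perp)$, in $l\mathrm{tr}(TM)$, in $D_0$, nor in the $D_2$-summand of $\mathrm{Rad}\,TM$. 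I would then decompose $\xi_S=\overline{\phi}Z+\overline{\phi}S_0+\overline{\phi}L_0+d$ along $S(TM)=\{\overline{\phi}D_2\oplus\overline{\phi}\mathcal{S}\oplus\overline{\phi}\mathcal{L}\}\perp D_0$, with $Z\in\Gamma(D_2)$, $S_0\in\Gamma(\mathcal{S})$, $L_0\in\Gamma(\mathcal{L})$ and $d\in\Gamma(D_0)$, and evaluate $\overline{\phi}\xi_S$ by means of $\overline{\phi}^2=-\mathbb{I}+\eta\otimes\xi$ together with the vanishing of $\eta$ recorded above.

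The key point is that, because $\xi$ carries no $l\mathrm{tr}(TM)$ and no $S(TM^\perp)$ component, the relations $\overline{\phi}^2L_0=-L_0+\eta(L_0)\xi$ and $\overline{\phi}^2S_0=-S_0$ produce exactly $-L_0$ and $-S_0$ as the $l\mathrm{tr}(TM)$- and $S(TM^\perp)$-parts of $\overline{\phi}\xi_S$. Matching these against the containment $\overline{\phi}\xi_S\in D_1\oplus\overline{\phi}D_2$ forces $L_0=0$ and $S_0=0$, whence also $\eta(L_0)=0$; the $D_0$-component then gives $\overline{\phi}d=0$, while the $D_2$-part of $\overline{\phi}(\overline{\phi}Z)=-Z$ gives $Z=0$. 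Thus $\xi_S=d\in\Gamma(D_0)$ and $\overline{\phi}\xi_S=\overline{\phi}d=0$; feeding this back leaves $\overline{\phi}V_1=\overline{\phi}V_2=0$. Since $\eta$ vanishes on $\mathrm{Rad}\,TM$ we have $\overline{\phi}^2=-\mathbb{I}$ there, so $\overline{\phi}$ is injective on $D_1$ and on $D_2$; therefore $V_1=V_2=0$ and $V=0$.

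Consequently $\xi=\xi_S\in\Gamma(D_0)\subseteq\Gamma(S(TM))$, so that $\langle\xi\rangle\subseteq S(TM)$ with $\overline{\phi}\xi=0$. Comparing Definition \ref{def2} with the definition of a GCR-lightlike submanifold in \cite{ds3}, all structural conditions now coincide, and $M$ is a GCR-lightlike submanifold. The step I expect to be the main obstacle is the component-matching: one must track carefully how $\overline{\phi}$ permutes the summands, sending $\overline{\phi}D_2\to D_2$, $\overline{\phi}\mathcal{S}\to\mathcal{S}$ and $\overline{\phi}\mathcal{L}\to\mathcal{L}$ (the last only up to a multiple of $\xi$), and to exploit at each stage that tangency has already removed the $l\mathrm{tr}(TM)$ and $S(TM^\perp)$ parts of $\xi$, so that the coupling term $\eta(L_0)\xi$ drops out in the correct order and no circularity arises.
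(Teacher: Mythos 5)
Your proof is correct and takes essentially the route the paper intends: the paper's own ``proof'' is just the citation to \cite{ms}, where the analogous proposition is established by the same structural argument you give --- tangency of $\xi$ kills the coefficients $b_i$ and $c_\alpha$ in (\ref{eq2}), and applying $\overline{\phi}$ to $\xi$ with $\overline{\phi}\xi=0$ and matching components across $S(TM)\perp S(TM^{\perp})\perp\{\mathrm{Rad}\,TM\oplus l\mathrm{tr}(TM)\}$ forces $L_0=S_0=Z=0$, $\overline{\phi}d=0$, hence $V=0$ and $\xi\in\Gamma(D_0)$. Two small touch-ups: delete the premature remark that $\eta$ vanishes on $l\mathrm{tr}(TM)$, since that asserts $a_i=\eta(N_i)=0$, which is precisely the claim $V=0$ you are about to prove (your matching step correctly never uses it, because you retain the term $\eta(L_0)\xi$ and only discard it after $L_0=0$ is forced); and, to match the GCR decomposition of \cite{ds3} literally, record the final splitting $D_0=D_0'\perp\langle\xi\rangle$ with $D_0'$ still $\overline{\phi}$-invariant, which follows from $\eta\circ\overline{\phi}=0$ together with $\overline{g}(\xi,\xi)=1$.
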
 
 \begin{proof}
The proof uses similar arguments as in \cite{ms}.
 \end{proof}
With reference to  (\ref{eq05}), $TM$ can be rewritten as
\begin{equation}
 TM  =D \oplus \widehat{D}, \;\; \mbox{with}\;\;D =D_0\perp D_{1}\;\mbox{and}\;\widehat{D}=\{D_{2}\perp\overline{\phi} D_{2}\} \oplus\overline{D}.\nonumber
\end{equation} 
Notice that $D$ is invariant with respect to $\overline{\phi}$ while $\widehat{D}$ is not generally anti-invariant with respect to $\overline{\phi}$ as it is the case with the classical GCR-lightlike submanifolds \cite{ds3}.

Let  $(M,g,S(TM),S(TM^\perp))$ be a QGCR-lightlike submanifold  of $\overline{M}$, then from  Definition \ref{def2}:
          \begin{enumerate}
               \item condition (i) implies that $\dim(\mathrm{Rad}\, TM)=s\ge  3$,
               \item condition (ii) implies that $\dim(D)\ge  4l\ge 4$ and $\dim(D_2)= \dim(\mathcal{L})$. 
          \end{enumerate}       

\section{Minimal ascreen QGCR-lightlike submanifolds}\label{minimal} 
Consider a quasi-orthonormal frame a long $T\overline{M}$ given by 
\begin{equation}\label{cons}
\{E_{1},\cdots, E_{r} , X_{1} ,\cdots, X_{m}, W_{1},\cdots,W_{n}, N_{1},\cdots, N_{r}\},
 \end{equation}
 such that $\{E_{1},\cdots, E_{q} , X_{1} ,\cdots, X_{m}\}\in TM$. Let us suppose that $\{E_{1},\cdots, E_{2p}\}$, $\{E_{2p+1},\cdots, E_{r}\}$ and $\{X_{1} ,\cdots, X_{2l}\}$ are respectively the  bases of $D_{1}$, $D_{2}$ and $D_{0}$. Further, let $\{W_{r+1},\cdots,W_{k}\}$ and $\{N_{2p+1},\cdots,N_{r}\}$, respectively be the bases of $\mathcal{S}$ and $\mathcal{L}$.

\begin{definition}[\cite{dh1}]\label{def3}  {\rm
A lightlike submanifold $M$ of a semi-Riemannian manifold $\overline{M}$ is said to be \textit{ascreen} if the structure vector field, $\xi$, belongs to $\mathrm{Rad}\, TM \oplus l\mathrm{tr}(T M)$.
}
\end{definition}
The following result for ascreen QGCR-lightlike submanifolds is well-known (see Lemma 3.6 and Theorem 3.7 of \cite{ms}).

\begin{theorem}\label{asc}
 Let  $(M,g,S(TM),S(TM^\perp))$ be an ascreen QGCR-lightlike submanifold of an indefinite nearly $\mu$-Sasakian manifold $\overline{M}$, then $\xi\in\Gamma(D_{2}\oplus\mathcal{L})$. Further, if $M$ is a 3-lightlike QGCR submanifold, then $M$ is ascreen lightlike submanifold if and only if $\overline{\phi}\mathcal{L}=\overline{\phi} D_{2}$.
\end{theorem}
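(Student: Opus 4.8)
The plan rests on two observations. The first is that $\overline{\phi}$ has one--dimensional kernel equal to $\mathbb{R}\xi$: indeed $\overline{\phi}\xi=0$ by (\ref{equa1}), and if $\overline{\phi}V=0$ then applying $\overline{\phi}$ and using $\overline{\phi}^{2}=-\mathbb{I}+\eta\otimes\xi$ gives $V=\eta(V)\xi$. The second is the $\overline{\phi}$--action prescribed by Definition \ref{def2}, namely $\overline{\phi}D_1=D_1$, $\overline{\phi}D_2\subset S(TM)$ and $\overline{\phi}\mathcal{L}\subset\overline{D}\subset S(TM)$, together with the structural fact (the lemmas of \cite{ms}) that the null transversal sections $\{N_1,\dots,N_{2p}\}$ dual to $D_1$ span a $\overline{\phi}$--invariant subbundle of $l\mathrm{tr}(TM)$. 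I would begin by unwinding Definition \ref{def3}: comparing $\xi\in\mathrm{Rad}\,TM\oplus l\mathrm{tr}(TM)$ with the global expansion (\ref{eq2}) forces $\xi_S=0$ and every $c_\alpha=0$, so that $\xi=\sum_{i=1}^{r}a_iE_i+\sum_{i=1}^{r}b_iN_i$ with $a_i=\eta(N_i)$ and $b_i=\eta(E_i)$.

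For the first assertion I would apply $\overline{\phi}$ to this expression and invoke $\overline{\phi}\xi=0$, then separate the resulting identity according to the decomposition (\ref{eq04}). The terms coming from $D_2$ and $\mathcal{L}$ (indices $i\ge 2p+1$) land in $S(TM)$, whereas those coming from $D_1$ and its dual transversal bundle (indices $i\le 2p$) remain in $\mathrm{Rad}\,TM\oplus l\mathrm{tr}(TM)$; since $S(TM)$ meets $\mathrm{Rad}\,TM\oplus l\mathrm{tr}(TM)$ trivially, each block must vanish separately. Because $\overline{\phi}$ is injective on $D_1$ and on the null transversal bundle dual to $D_1$, the vanishing of the $\mathrm{Rad}\,TM\oplus l\mathrm{tr}(TM)$ block forces $a_i=b_i=0$ for all $i\le 2p$, leaving $\xi=\sum_{i=2p+1}^{r}a_iE_i+\sum_{i=2p+1}^{r}b_iN_i\in\Gamma(D_2\oplus\mathcal{L})$.

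For the $3$--lightlike equivalence I would first record that properness together with $\overline{\phi}D_1=D_1$ (an even--dimensional bundle) forces $\dim D_1=2$ and $\dim D_2=\dim\mathcal{L}=1$, say $D_1=\mathrm{span}\{E_1,E_2\}$, $D_2=\mathrm{span}\{E_3\}$ and $\mathcal{L}=\mathrm{span}\{N_3\}$. If $M$ is ascreen, the first assertion gives $\xi=a_3E_3+b_3N_3$, and the normalization $1=\eta(\xi)=\overline{g}(\xi,\xi)=2a_3b_3$ (using (\ref{equa2}) and $\overline{g}(E_3,N_3)=1$) shows $a_3,b_3\neq0$; feeding this into $\overline{\phi}\xi=0$ yields $a_3\overline{\phi}E_3=-b_3\overline{\phi}N_3$, so the lines $\overline{\phi}D_2$ and $\overline{\phi}\mathcal{L}$ coincide. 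Conversely, if $\overline{\phi}\mathcal{L}=\overline{\phi}D_2$ then $\overline{\phi}E_3$ and $\overline{\phi}N_3$ are proportional, say $\overline{\phi}(E_3-\lambda N_3)=0$; since $E_3-\lambda N_3\neq0$ it must be a nonzero multiple of $\xi$ by the kernel computation, whence $\xi\in\mathrm{span}\{E_3,N_3\}=D_2\oplus\mathcal{L}\subseteq\mathrm{Rad}\,TM\oplus l\mathrm{tr}(TM)$ and $M$ is ascreen.

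I expect the genuine obstacle to be the structural input used in the first paragraph: verifying that $\overline{\phi}$ carries the null transversal space dual to $D_1$ into itself, i.e. that $\overline{\phi}N_i$ has no $S(TM)$-- or $S(TM^{\perp})$--component for $i\le 2p$. This is exactly what makes the component separation in the second paragraph clean. I would establish it by testing $\overline{g}(\overline{\phi}N_i,\,\cdot\,)$ against each block of (\ref{eq04}), using the skew--symmetry $\overline{g}(\overline{\phi}X,Y)=-\overline{g}(X,\overline{\phi}Y)$ that follows from (\ref{equa2}) and the known $\overline{\phi}$--images of $D_0$, $\overline{\phi}D_2$, $\overline{\phi}\mathcal{S}$ and $\overline{\phi}\mathcal{L}$; alternatively one may simply cite Lemma~3.6 of \cite{ms}.
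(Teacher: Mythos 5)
Your second paragraph (the $3$-lightlike equivalence) is correct, and your overall strategy for the first assertion --- expand $\xi$ as in (\ref{eq2}), kill $\xi_S$ and the $c_\alpha$ by the ascreen hypothesis, apply $\overline{\phi}$ and separate components --- is the right one. (For comparison: the paper offers no proof at all here, merely citing Lemma 3.6 and Theorem 3.7 of \cite{ms}, so your attempt is a genuine reconstruction.) But the structural input you yourself flag as the crux is a real gap, and your proposed verification of it is circular. You want $\overline{\phi}N_i$, $i\le 2p$, to have no component in $S(TM)$ or $S(TM^\perp)$. Testing the $S(TM)$-blocks by skew-symmetry gives, via (\ref{equa2}), $\overline{g}(\overline{\phi}N_i,\overline{\phi}E_j)=\overline{g}(N_i,E_j)-\eta(N_i)\eta(E_j)=-a_ib_j$ for $i\le 2p<j$, and likewise $\overline{g}(\overline{\phi}N_i,\overline{\phi}N_j)=-a_ia_j$; these vanish exactly when $a_i=\eta(N_i)=0$ for $i\le 2p$, which is the very content of the assertion being proved. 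Worse, Definition \ref{def2} imposes no condition whatsoever on $\overline{\phi}$ restricted to the complement of $\mathcal{S}$ in $S(TM^\perp)$, so your pairing test cannot exclude an $S(TM^\perp)$-component of $\overline{\phi}N_i$ either; the claimed $\overline{\phi}$-invariance of $\mathrm{span}\{N_1,\dots,N_{2p}\}$ is simply not a consequence of the definition prior to knowing where $\xi$ sits.

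The good news is that the lemma is unnecessary, because the problematic terms can be removed \emph{before} any separation. Since $D_1=\overline{\phi}D_1$ and $\eta\circ\overline{\phi}=0$ by (\ref{equa1}), $\eta$ vanishes identically on $D_1$; hence $b_i=\eta(E_i)=0$ for all $i\le 2p$, and the coefficients multiplying the unlocated vectors $\overline{\phi}N_i$, $i\le 2p$, are already zero. What remains of $0=\overline{\phi}\xi$ is $\sum_{i\le 2p}a_i\overline{\phi}E_i+\sum_{i>2p}\bigl(a_i\overline{\phi}E_i+b_i\overline{\phi}N_i\bigr)=0$, in which the first sum lies in $D_1\subset\mathrm{Rad}\,TM$ and the second in $S(TM)$ by (\ref{eq03}) and (\ref{s81}); since these meet trivially, $\sum_{i\le 2p}a_i\overline{\phi}E_i=0$, and because $\eta|_{D_1}=0$ forces $\overline{\phi}^2=-\mathbb{I}$ on $D_1$, the map $\overline{\phi}|_{D_1}$ is an isomorphism and $a_i=0$ for $i\le 2p$. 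This yields $\xi\in\Gamma(D_2\oplus\mathcal{L})$ without ever locating $\overline{\phi}N_i$ (and it also repairs the minor unaddressed point in your injectivity step, namely that injectivity of $\overline{\phi}$ on the joint span of $D_1$ and its dual transversal bundle would require knowing $\xi$ does not lie in that span). Your forward and converse arguments for the $3$-lightlike case go through verbatim once this is in place: $1=\overline{g}(\xi,\xi)=2a_3b_3$ gives $a_3,b_3\neq 0$ and hence $\overline{\phi}\mathcal{L}=\overline{\phi}D_2$, while conversely $\overline{\phi}(E_3-\lambda N_3)=0$ together with $\ker\overline{\phi}=\mathbb{R}\xi$ puts $\xi$ in $\mathrm{Rad}\,TM\oplus l\mathrm{tr}(TM)$, as required.
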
 
 From (\ref{cons}) and  (\ref{eq2}), we can write the generalized structure vector field of an ascreen QGCR-lightlike submanifold as  
 \begin{equation}\label{T1}
  \xi=\sum_{i=2p+1}^{r}a_{i}E_{i}+\sum_{i=2p+1}^{r}b_{i}N_{i},
 \end{equation}
where $a_{i}=\overline{g}(N_{i},\xi)$ and $b_{i}=\overline{g}(E_{i},\xi).$
 
 Now, using (\ref{cons}) and Theorem \ref{asc} above, we deduce the following for an $r$-lightlike ascreen QGCR-submanifold $(M,g)$.
 \begin{proposition}\label{prop}
 Let $M$ be a proper $r$-lightlike ascreen QGCR submanifold, where $r\ge 3$, of an indefinite almost contact manifold $\overline{M}$. Then, there exist at least one  pair $\{E_{u},N_{u}\}\subset \mathcal{L}\oplus D_{2}$ and a corresponding non-vanishing real valued smooth function $\sigma_{u}$, where $u\in\{2p+1,\cdots,r\}$, such that $\overline{\phi}N_{u}=\sigma_{u}\overline{\phi}E_{u}$ and $\dim( \overline{\phi}\mathcal{L}\oplus  \overline{\phi}D_{2})\ge1$. Equality occurs when $r=3$.
\end{proposition}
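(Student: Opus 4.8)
The plan is to extract everything from the single algebraic identity obtained by hitting the ascreen expression (\ref{T1}) with $\overline{\phi}$. Since $\overline{\phi}\xi=0$ by (\ref{equa1}), applying $\overline{\phi}$ to $\xi=\sum_{i=2p+1}^{r}a_iE_i+\sum_{i=2p+1}^{r}b_iN_i$ immediately yields the linear relation $\sum_i a_i\,\overline{\phi}E_i+\sum_i b_i\,\overline{\phi}N_i=0$. This relation is the engine of the whole statement: it is a genuine dependence among the vectors $\overline{\phi}E_i\in\overline{\phi}D_2$ and $\overline{\phi}N_i\in\overline{\phi}\mathcal{L}$, and the proposition is essentially a reading of what this dependence must look like.

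Second, I would pin down the non-degeneracy data needed to keep the coefficients honest. From (\ref{equa2}) with $\overline{X}=\overline{Y}=\xi$ and $\overline{\phi}\xi=0$ one gets $\overline{g}(\xi,\xi)=1$. Feeding (\ref{T1}) into this and using that $D_2\subset\mathrm{Rad}\,TM$ and $\mathcal{L}\subset l\mathrm{tr}(TM)$ are null with $g(E_i,N_j)=\delta_{ij}$, all the self and mixed pairings collapse to $2\sum_i a_ib_i=1$. In particular $\sum_i a_ib_i\neq0$, so neither the $D_2$-part $\widetilde E:=\sum_i a_iE_i$ nor the $\mathcal{L}$-part $\widetilde N:=\sum_i b_iN_i$ of $\xi$ can vanish (otherwise $\xi$ would be null), and moreover $g(\widetilde E,\widetilde N)=\tfrac12\neq0$. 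I would also record that $\overline{\phi}$ is injective on $D_2$ and on $\mathcal{L}$: if $\overline{\phi}E=0$ for $E$ in either null distribution, then $\overline{\phi}^{2}E=-E+\eta(E)\xi=0$ forces $E=\eta(E)\xi$, whence $0=\overline{g}(E,E)=\eta(E)^2$ and $E=0$.

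Third, these facts turn the relation into the asserted proportionality. Rewriting it as $\overline{\phi}\widetilde N=-\overline{\phi}\widetilde E$ with $\widetilde E\neq0\neq\widetilde N$ produces a nonzero vector lying in $\overline{\phi}D_2\cap\overline{\phi}\mathcal{L}$ (nonzero by the injectivity above). Normalising the dually paired vectors $\widetilde E,\widetilde N$ (possible since $g(\widetilde E,\widetilde N)\neq0$) and relabelling them as $\{E_u,N_u\}$ gives a pair in $\mathcal{L}\oplus D_2$ with $\overline{\phi}N_u=\sigma_u\,\overline{\phi}E_u$ for a non-vanishing smooth $\sigma_u$; in the defining case $r=3$ the distributions $D_2,\mathcal{L}$ are already one-dimensional, so $\{E_u,N_u\}=\{E_3,N_3\}$ is the original frame pair and $\sigma_u=-a_3/b_3$ is explicit and non-vanishing because $a_3b_3=\tfrac12$. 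Finally, the proportionality exhibits a common line, so the relevant span has dimension at least one; and when $r=3$, $\dim D_2=\dim\mathcal{L}=1$ (a proper QGCR structure forces $\dim D_1=2$), which together with Theorem \ref{asc} (where $\overline{\phi}\mathcal{L}=\overline{\phi}D_2$) pins the dimension to exactly one.

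The step I expect to be the real obstacle is the passage, for general $r>3$, from the single global relation $\sum_i a_i\overline{\phi}E_i+\sum_i b_i\overline{\phi}N_i=0$ to an honest frame pair: a priori one only obtains the recombined vectors $\widetilde E,\widetilde N$, and promoting these to members $E_u\in D_2$, $N_u\in\mathcal{L}$ of a quasi-orthonormal frame satisfying $g(E_i,N_j)=\delta_{ij}$ requires a careful basis change that I would need to check preserves the normalisation while keeping $\sigma_u$ non-vanishing. The remaining computations (the collapse of the pairings and the injectivity of $\overline{\phi}$) are routine.
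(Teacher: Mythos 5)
Your proof is correct and is essentially the argument the paper compresses into its one-line citation of Theorem \ref{asc} (i.e.\ Lemma 3.6 and Theorem 3.7 of \cite{ms}): apply $\overline{\phi}$ to (\ref{T1}) using $\overline{\phi}\xi=0$, deduce $2\sum_i a_ib_i=\overline{g}(\xi,\xi)=1$ so that both null components of $\xi$ are nonzero, invoke injectivity of $\overline{\phi}$ on null directions, and read off the proportionality with $\sigma_u=-a_u/b_u$ --- which matches the paper's explicit examples ($\sigma_3=-\tfrac14$ and $\sigma_4=-\tfrac{1}{2z^2}$, with $a_ub_u=\tfrac12$ in each). The frame-completion step you flag for $r>3$ is routine rather than a genuine obstacle: set $E_u=\widetilde{E}$, $N_u=2\widetilde{N}$ so that $g(E_u,N_u)=1$, and complete to dual null bases of $D_2$ and $\mathcal{L}$ by the standard inductive construction on the subbundles $\{E\in D_2: g(E,N_u)=0\}$ and $\{N\in\mathcal{L}: g(N,E_u)=0\}$, which disturbs neither the relation $\overline{\phi}N_u=\sigma_u\overline{\phi}E_u$ nor the non-vanishing of $\sigma_u$.
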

\begin{proof}
The proof follows from Theorem \ref{asc} above.
\end{proof}
As an example, we construct a $4$-lightlike ascreen QGCR submanifold. Let us consider the case $\mu=0$ and $\overline{H}=0$. That is,  $\overline{M}=(\mathbb{R}_{q}^{2m+1}, \overline{\phi}_{0} ,\xi, \eta, \overline{g})$ is an indefinite cosymplectic manifold with the usual cosympletic structure;
\begin{align*}
   &\eta = dz,\quad\xi=\partial z,\\
   &\overline{g} =\eta\otimes\eta-\sum_{i=1}^{\frac{q}{2}}(dx^i\otimes dx^i+dy^i\otimes dy^i)+\sum_{i=q+1}^{m}(dx^i\otimes dx^i+dy^i\otimes dy^i),\\
   &\phi_0 (\sum_{i=1}^m(X_i\partial x^i+Y_i\partial y^i)+Z\partial z )=\sum_{i=1}^m(Y_i\partial x^i-X_i\partial y^i),
\end{align*}
where $(x^{i} , y^{i} , z)$ are Cartesian coordinates and $\partial t_{k} = \frac{\partial}{\partial t^{k}}$, for $t\in\mathbb{R}^{2m+1}$.

\begin{example}
 {\rm
 Let $\overline{M}=(\mathbb{R}_{6}^{15}, \overline{g})$ be a semi-Euclidean space,  where $\overline{g}$ is of signature $(-,-,-,+,+,+,+, -,-,-,+,+,+,+,+)$ with respect to the canonical basis
\begin{equation*}
 (\partial x_{1},\partial x_{2},\partial x_{3},\partial x_{4},\partial x_{5},\partial x_{6},\partial x_{7},\ \partial y_{1},\partial y_{2},\partial y_{3},\partial y_{4},\partial y_{5},\partial y_{6},\partial y_{7},\partial z).
\end{equation*}
Let $(M,g)$ be a submanifold of $\overline{M}$ given by 
\begin{align*}
 y^1=-x^4,\;\; y^{2}=x^{5},\;\; y^{3}=\sqrt{z^{2}- (x^{3})^{2}},\;\;  y^4=x^1,\;\; y^{6}=x^{6},\;x^{3},y^{3}>0.
 \end{align*}
By direct calculations, one can easily check that the vector fields
 \begin{align*}
  E_{1}& =\partial x_{4}+\partial y_{1},\quad E_{2}=\partial x_{1}-\partial y_{4},\quad E_{3}=\partial x_{5}+\partial y_{2},\\
  E_{4}&=x^{3}\partial x_{3}+y^{3}\partial y_{3}+z\partial z,\quad X_{1}=\partial x_{6}+\partial y_{6},\quad X_{2}=\partial x_{2}-\partial y_{5},\\
  X_{3}&=y^{3}\partial x_{3}-x^{3}\partial y_{3},\quad X_{4}=-\partial x_{2}-\partial y_{5},\quad X_{5}=\partial y_{7}, \quad X_{6}=\partial x_{7}, 
 \end{align*}
form a local frame of $TM$. With reference to  the above frame, we see that  $\mathrm{Rad} \, TM $ is spanned by $\{E_{1}, E_{2}, E_{3}, E_{4}\}$, and therefore $M$ is a 4-lightlike submanifold. Further more, $\overline{\phi}_0 E_1=E_2$, therefore we set $D_{1}=\mbox{span}\{E_{1},E_{2}\}$. Notice that $\overline{\phi}_0 E_3=X_{2}$ and $\overline{\phi}_{0}E_{4}=X_{3}$ thus, $D_2=\mathrm{span}\{E_{3},E_{4}\}$. Also, $\overline{\phi}_{0} X_{5}=X_{6}$, so we set $D_{0}=\mathrm{span}\{X_{5},X_{6}\}$. Further, by following direct calculations, we have 
\begin{align} 
 N_{1} & =\frac{1}{2}(\partial x_{4}-\partial y_1),\quad  N_{2}=\frac{1}{2}(-\partial x_{1}-\partial y_{4}),\quad N_{3}=\frac{1}{2}(\partial x_{5}-\partial y_{2})\nonumber\\
 N_{4} & =\frac{1}{2z^{2}}(- x^{3}\partial x_{3} - y^{3}\partial y_{3}+z\partial z),\quad W=\partial x_{6}-\partial y_{6}.\nonumber 
\end{align}
Note that $l\mathrm{tr}(TM)=\mathrm{span}\{N_{1},N_{2},N_{3},N_{4}\}$ and $\mathcal{S}=\mbox{span}\{W\}$. It is easy to see that $\overline{\phi}_{0} N_{2}=-N_{1}$ and $\overline{\phi}_{0} N_{3}=X_{4}$. Notice $\overline{\phi}_{0} N_{4}=-\frac{1}{2z^{2}}X_{3}=-\frac{1}{2z^{2}}\overline{\phi}E_{4}$ and, hence, $\sigma_{4}=-\frac{1}{2z^{2}}$ (see Proposition \ref{prop}). Therefore, $\mathcal{L}=\mathrm{span}\{N_{3},N_{4}\}$.  Also, $\overline{\phi}W=-X_{1}$ and hence $\mathcal{S}=\mathrm{span}\{W\}$. Observe that $\overline{\phi}\mathcal{L}\oplus  \overline{\phi}D_{2}=\mbox{span}\{X_{2}, X_{3},X_{4}\}$ and therefore $\dim(\overline{\phi}\mathcal{L}\oplus  \overline{\phi}D_{2})=3$. Applying $\overline{\phi}_{0}$ to  (\ref{T1}) and substituting the corresponding $\overline{\phi}_{0}E_{i}$s and $\overline{\phi}_{0}N_{i}$s for $i=3,4$ we obtain $a_{3}+b_{3}=0$, $a_{3}-b_{3}=0$ and $2z^{2}a_{4}=b_{4}$.  Finally, we get $\xi=\frac{1}{2z}E_{4}+zN_{4}$.  Since $\overline{\phi}_0\xi=0$ and $\overline{g}(\xi,\xi)=1$, we see that $(M,g)$ is a 4-lightlike ascreen QGCR submanifold of $\overline{M}$ satisfying the hypothesis of Proposition \ref{prop}.
 }
\end{example}

Next, we adapt the definition of minimal lightlike submanifolds given by \cite{ds3}.
\begin{definition} \label{mini}{\rm
 A lightlike submanifold $(M,g,S(TM))$ of a semi-Riemannian $(\overline{M},\overline{g})$ is called minimal if;
 \begin{enumerate}
  \item  $h^{s}=0$ on $\mathrm{Rad} \, TM$ and,
  \item $\mathrm{trace}(h)=0$, where trace is writen with respect to $g$ restricted to $S(TM)$.
 \end{enumerate}}
\end{definition}
It is well-known that the Definition \ref{mini} is independent of the choice of the screen distribution $S(TM)$ \cite{ds3}.  

Now, we construct a minimal ascreen QGCR-lightlike submanifold, which is note totally geodesic, of a  nearly $\mu$-Sasakian manifold with $\mu=0$ and $\overline{H}=0$ (i.e., the ambient space is a cosymplectic manifold).

\begin{example}\label{exa1}
{\rm
Let $\overline{M}=(\mathbb{R}_4^{13}, \overline{g})$ be a semi-Euclidean space,  where $\overline{g}$ is of signature $(-,-,+,+,+,+, - , -,+,+,+,+,+)$ with respect to the canonical basis
\begin{equation*}
 (\partial x_{1},\partial x_{2},\partial x_{3},\partial x_{4},\partial x_{5},\partial x_{6}, \partial y_{1},\partial y_{2},\partial y_{3},\partial y_{4},\partial y_{5},\partial y_{6},\partial z).
\end{equation*}
Let $(M,g)$ be a submanifold of $\overline{M}$ given by 
\begin{align*}
 &x^{1}=\omega^{1},\;\;x^{2}=\omega^{2},\;\;x^{3}=\omega^{3},\;\;x^{4}=\omega^{4},\;\;x^{5}=\cos\omega^{5}\cosh\omega^{6},\\&x^{6}=\sin\omega^{5}\cosh\omega^{6}, \;\; y^{1}=-\omega^{4},\;\;y^{2}=\sqrt{2}\omega^{8}-\omega^{2},\;\;\;y^{3}=\omega^{7},\\& y^{4}=\omega^{1},\;\;  y^{5}=\cos\omega^{5}\sinh\omega^{6},\;\;y^{6}=\sin\omega^{5}\sinh\omega^{6},\;\;\;\;z=\omega^{8}.
 \end{align*}
By direct calculations, we can easily check that the vector fields
\begin{align*} 
 E_1 & =\partial x_4+\partial y_1,\quad E_2=\partial x_1-\partial y_4,\\
 E_3 & = \partial x_2 +\partial y_2+\sqrt{2}\partial z, \quad X_{1}=-x^{6}\partial x_{5}+x^{5}\partial x_{6}  -y^{6}\partial y_{5}+y^{5}\partial y_{6}\\ 
 X_{2} & =-\partial x_{2} +\partial y_{2},\quad  X_{3}=y^{5}\partial x_{5}+y^{6}\partial x_{6}+x^{5}\partial y_{5}+x^{6}\partial y_{6},\\
 X_{4}&=\partial y_{3},\; X_{5}=\partial x_{3}, 
\end{align*}
form a local frame of $TM$. From the above frame, we can see that $\mathrm{Rad} \, TM$ is spanned by $\{E_{1}, E_{2}, E_{3}\}$, and therefore, $M$ is a 3-lightlike submanifold. Further, $\overline{\phi}_0 E_1=E_2$, therefore we set $D_{1}=\mbox{span}\{E_{1},E_{2}\}$. Also $\overline{\phi}_0 E_3=-X_2$ and thus $D_2=\mathrm{span}\{E_3\}$. It is easy to see that $\overline{\phi}_{0} X_{4}=X_{5}$, so we set $D_{0}=\mathrm{span}\{X_{4},X_{5}\}$. On the other hand, following direct calculations, we have 
\begin{align} 
 N_{1} & =\frac{1}{2}(\partial x_{4}-\partial y_1),\;  N_{2}=\frac{1}{2}(-\partial x_{1}-\partial y_{4}),\nonumber\\
 N_{3} & =\frac{1}{4}(- \partial x_{2} - \partial y_{2}+\sqrt{2}\partial z),\;   W_{1}=-y^{6}\partial x_{5}+y^{5}\partial x_{6}+x^{6}\partial y_{5}-x^{5}\partial y_{6}, \nonumber\\
 W_{2}&=x^{5}\partial x_{5}+x^{6}\partial x_{6}-y^{5}\partial y_{5}-y^{6}\partial y_{6},\nonumber
\end{align}
from which $l\mathrm{tr}(TM)=\mathrm{span}\{N_{1},N_{2},N_{3}\}$ and 
$S(TM^\perp)=\mathrm{span}\{W_{1},W_{2}\}$. Clearly, $\overline{\phi}_{0} N_{2}=-N_{1}$. 
Further, $\overline{\phi}_0 N_{3}=\frac{1}{4} X_{2}$ and thus 
$\mathcal{L}=\mbox{span}\{N_{3}\}$. Notice that 
$\overline{\phi}_{0}N_{3}=-\frac{1}{4}\overline{\phi}_{0} E_{3}$, which implies that $\sigma_{3}=-\frac{1}{4}$ and therefore, 
$\overline{\phi}_0\mathcal{L}=\overline{\phi}_{0} D_{2}$. Also, $\overline{\phi}_{0} 
W_{1}=-X_{1}$ and $\overline{\phi}_{0} 
W_{2}=-X_{3}$. Therefore $\mathcal{S}=\mbox{span}\{W_{1},W_{2}\}$. Now, we calculate 
$\xi$ as follows: Using (\ref{T1}) we have $\xi=a_{3} E_{3}+b_{3} N_{3}$. Applying 
$\overline{\phi}_{0}$ to this equation we obtain $a_{3}\overline{\phi}_{0} 
E_{3}+b_{3}\overline{\phi}_{0} N_{3}=0$. Now, substituting for $\overline{\phi}_{0} E_{3}$ and 
 $\overline{\phi}_{0} N_{3}$ in this equation we get $4a_{3}=b_{3}$, from which we get 
$\xi=\frac{1}{2\sqrt{2}}E_{3}+\sqrt{2}N_{3}$. Since $\overline{\phi}_0\xi=0$ and 
$\overline{g}(\xi,\xi)=1$, we see that $(M,g)$ is a proper ascreen QGCR-lightlike 
submanifold of $\overline{M}$. Finally, we verify the minimality of $(M,g)$. By simple calculations one can verify easily that the following vectors;
\begin{align*}
 & \widehat{X}_{1}=\frac{1}{\sqrt{\varrho}}X_{1},\;\;\; \widehat{X}_{2}=\frac{1}{\sqrt{2}}X_{2},\;\;\widehat{X}_{3}=\frac{1}{\sqrt{\varrho}}X_{3},\;\;\;\widehat{X}_{4}=X_{4},\;\;\widehat{X}_{5}=X_{5},\\
 &\widehat{W}_{1}=\frac{1}{\sqrt{\varrho}}W_{1},\;\;\;\widehat{W}_{2}=\frac{1}{\sqrt{\varrho}}W_{2},\;\;\;\;\mbox{where}\;\;\; \varrho:=\cosh2\omega^{6},
\end{align*}
are unit vector fields. Moreover,  $\epsilon_{2}=g(\widehat{X}_{2},\widehat{X}_{2})=-1$, $\epsilon_{i}=g(\widehat{X}_{i},\widehat{X}_{i})=1$, for $i=1,3,4,5$ and $\epsilon_{\alpha}=\overline{g}(\widehat{W}_{\alpha},\widehat{W}_{\alpha})=1$, for $\alpha=1,2$. Now, applying (\ref{eq11}) and Koszul's formula (see \cite{db}) one gets $ h(E_{1},E_{1}) =h(E_{2},E_{2})=h(E_{3},E_{3})=h(\widehat{X}_{2},\widehat{X}_{2})=0$, $ h(\widehat{X}_{4},\widehat{X}_{4})=h(\widehat{X}_{5},\widehat{X}_{5})=0$, $h^{l}(\widehat{X}_{1},\widehat{X}_{1})=h^{l}(\widehat{X}_{3},\widehat{X}_{3})=0$, $h^{s}(\widehat{X}_{1},\widehat{X}_{1}) =-\frac{1}{\varrho\sqrt{\varrho}}\widehat{W}_{2}$ and $h^{s}(\widehat{X}_{3},\widehat{X}_{3})=\frac{1}{\varrho\sqrt{\varrho}}\widehat{W}_{2}$. Hence, $M$ is not a totally geodesic ascreen QGCR-lightlike submanifold. Also, we have
$
 \mathrm{trace}(h)|_{S(TM)}=\epsilon_{1}h^{s}(\widehat{X}_{1},\widehat{X}_{1})+\epsilon_{2}h^{s}(\widehat{X}_{3},\widehat{X}_{3})=0.
$
Therefore, $M$ is a minimal proper ascreen QGCR-lightlike submanifold of $\overline{M}$.
}
\end{example}
\begin{definition}[\cite{ds2}]
{\rm
 A lightlike submanifold $M$ of a semi-Riemannian manifold $(\overline{M},\overline{g})$ is called irrotational if $\overline{\nabla}_{X}E\in\Gamma(TM)$, for any $E\in\Gamma(\mathrm{Rad} \, TM))$ and $X\in\Gamma(TM)$. Equivalently, $M$ is irrotational if 
\begin{equation}\label{ms40}
 h^{l}(X,E)= h^{s}(X,E)=0, \;\forall \; X\in\Gamma(TM),\; E\in\Gamma(\mathrm{Rad} \,TM).
\end{equation} 
}
\end{definition}

\begin{theorem}
 Let $(M,g)$ be an irrotational ascreen QGCR-lightlike submanifold of an indefinite nearly $\mu$-Sasakian manifold $(\overline{M},\overline{g})$. If  $\nabla$ is a metric connection, then $M$ is minimal if $\mathrm{trace}(A_{W_{\alpha}})|_{S(TM)}=0.$
\end{theorem}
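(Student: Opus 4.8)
The plan is to verify directly the two defining conditions of minimality from Definition \ref{mini}, treating them separately and drawing on a distinct hypothesis for each. The guiding observation is that irrotationality controls the behaviour of $h^s$ on the radical distribution, whereas the combination of the metric-connection hypothesis with the vanishing of the screen shape-operator traces controls the trace of $h$ on $S(TM)$.

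First I would dispose of condition (1). Since $M$ is irrotational, equation (\ref{ms40}) gives $h^s(X,E)=0$ for every $X\in\Gamma(TM)$ and $E\in\Gamma(\mathrm{Rad}\,TM)$; in particular $h^s$ vanishes identically on $\mathrm{Rad}\,TM$, which is precisely condition (1), so no further work is required there.

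The substance of the proof lies in condition (2). Fixing an orthonormal screen frame $\{\widehat{X}_a\}$ with signatures $\epsilon_a=g(\widehat{X}_a,\widehat{X}_a)$, I would write, using (\ref{h1}),
\begin{equation*}
 \mathrm{trace}(h)|_{S(TM)}=\sum_a\epsilon_a\,h(\widehat{X}_a,\widehat{X}_a)=\sum_a\epsilon_a\sum_{i}h_i^l(\widehat{X}_a,\widehat{X}_a)N_i+\sum_a\epsilon_a\sum_{\alpha}h_\alpha^s(\widehat{X}_a,\widehat{X}_a)W_\alpha,
\end{equation*}
and then argue that both the $l\mathrm{tr}(TM)$-part and the $S(TM^\perp)$-part vanish. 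For the first part I would exploit the metric-connection hypothesis through (\ref{metric}): testing $(\nabla_X g)(Y,Z)=0$ with $Z=E_k\in\Gamma(\mathrm{Rad}\,TM)$ and $Y\in\Gamma(S(TM))$, and invoking $\lambda_i(E_k)=\delta_{ik}$ together with $\lambda_i|_{S(TM)}=0$, forces $h_k^l(X,Y)=0$ for all $X\in\Gamma(TM)$ and $Y\in\Gamma(S(TM))$. In particular $h_i^l(\widehat{X}_a,\widehat{X}_a)=0$, so the lightlike part of the trace drops out. For the second part I would restrict the shape-operator identity $g(A_{W_\alpha}X,Y)=\epsilon_\alpha h_\alpha^s(X,Y)+\sum_i\varphi_{\alpha i}(X)\lambda_i(Y)$ to $X,Y\in\Gamma(S(TM))$, where $\lambda_i(Y)=0$, obtaining $h_\alpha^s(\widehat{X}_a,\widehat{X}_a)=\epsilon_\alpha\,g(A_{W_\alpha}\widehat{X}_a,\widehat{X}_a)$. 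Summing over $a$ then yields $\sum_a\epsilon_a h_\alpha^s(\widehat{X}_a,\widehat{X}_a)=\epsilon_\alpha\,\mathrm{trace}(A_{W_\alpha})|_{S(TM)}$, which is zero by hypothesis; hence the screen part vanishes as well and $\mathrm{trace}(h)|_{S(TM)}=0$.

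With both conditions of Definition \ref{mini} established, $M$ is minimal. I expect the only delicate point to be the extraction of $h_i^l|_{S(TM)}=0$ from the bare assumption that $\nabla$ is metric: one must select the test vectors in (\ref{metric}) carefully, placing radical vectors in the slots paired with the $\lambda_i$, and then track the signature factors $\epsilon_\alpha$ through the final trace identity, since a stray sign would spoil the clean cancellation. The remaining geometric content is routine once this bookkeeping is handled.
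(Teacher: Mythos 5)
Your proposal is correct, and its logical skeleton coincides with the paper's: irrotationality via \eqref{ms40} settles condition (1) of Definition \ref{mini}; the metric-connection hypothesis fed into \eqref{metric}, with a radical vector placed in the slot paired with $\lambda_i$, kills the $h^l_i$-components of $h$ on $S(TM)$ (this is the paper's equation \eqref{K3}); and the shape-operator identity $g(A_{W_\alpha}X,Y)=\epsilon_\alpha h^s_\alpha(X,Y)+\sum_i\varphi_{\alpha i}(X)\lambda_i(Y)$, restricted to screen vectors where the $\lambda_i$-terms vanish, converts the remaining screen-transversal part of the trace into $\epsilon_\alpha\,\mathrm{trace}(A_{W_\alpha})|_{S(TM)}$, which is zero by hypothesis. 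Where you genuinely differ is in the frame used to compute $\mathrm{trace}(h)|_{S(TM)}$: you take an arbitrary orthonormal frame $\{\widehat{X}_a\}$ of the non-degenerate screen and exploit frame-independence of the trace, whereas the paper expands over the QGCR-adapted spanning set $\{X_t,\overline{\phi}E_j,\overline{\phi}N_j,Z_u,\overline{\phi}W_d\}$ of \eqref{K8}, which obliges it to invoke Proposition \ref{prop} (the ascreen collapse $\overline{\phi}N_u=\sigma_u\overline{\phi}E_u$) and to verify non-nullity, $g(\overline{\phi}E_u,\overline{\phi}E_u)=-a_ub_u\neq 0$, before tracing. Your route is tidier and arguably more rigorous: the paper's adapted frame is not orthonormal (for $2p+1\le j\le\kappa$ the vectors $\overline{\phi}E_j$, $\overline{\phi}N_j$ are unnormalized and can have nontrivial cross inner products), so the sum \eqref{K1} implicitly suppresses normalization factors and cross-terms that your orthonormal-frame computation never has to confront; what the paper's choice buys in exchange is an explicit display of where the ascreen QGCR structure enters the trace formula. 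The two delicate points you flag — the placement of the radical vector against the $\lambda_i$-slot in \eqref{metric}, and the bookkeeping of $\epsilon_\alpha$ (harmless since $\epsilon_\alpha^2=1$) — are exactly the right ones, and both are handled correctly in your argument.
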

\begin{proof}
 First, we notice that when $M$ is irrotational then (\ref{ms40}) implies that $h^{s}=0$ on $\mathrm{Rad} \,TM$. Thus, condition (i) of Definition \ref{mini} is satisfied.  Now using Definition \ref{def2}  we can see that the screen distribution $S(TM)$ is generally spanned by 
 \begin{equation}\label{K8}
 \{X_{1},\cdots,X_{2l},\overline{\phi}E_{2p+1},\cdots,\overline{\phi}E_{r},\overline{\phi}N_{2p+1},\cdots,\overline{\phi}N_{r}, \overline{\phi}W_{r+1}, \cdots,\overline{\phi}W_{k}\}.
 \end{equation}
  Since $M$ is ascreen QGCR-lightlike submanifold, the dimension of the frame in (\ref{K8}) is lower than that of a comparable GCR-lightlike subamanifold due to existence of some $u\in\{2p+1,\cdots,r\}$ and non-vanishing smooth function(s) $\sigma_{u}$ such that $\overline{\phi}N_{u}=\sigma_{u}\overline{\phi}E_{u}$ (see Proposition \ref{prop} above). Furthermore, the vectors $\overline{\phi}E_{u}$ and $\overline{\phi}N_{u}$ are non-null, since $g(\overline{\phi}E_{u},\overline{\phi}E_{u})=-a_{u}b_{u}\neq0$ and $g(\overline{\phi}N_{u},\overline{\phi}N_{u})=-a_{u}b_{u}\neq0$. Thus, by setting $Z_{u}=\overline{\phi}N_{u}=\sigma_{u}\overline{\phi}E_{u}$ we have
 \begin{align}\label{K1}
  &\mathrm{trace}(h)|_{S(TM)}=\sum_{t=1}^{2l}\epsilon_{t}h(X_{t},X_{t})+\sum_{j=2p+1}^{\kappa}h(\overline{\phi}E_{j},\overline{\phi}E_{j})\nonumber\\
   &\sum_{j=2p+1}^{\kappa}h(\overline{\phi}N_{j},\overline{\phi}N_{j})+ \sum_{u=\kappa+1}^{r}\epsilon_{u}h(Z_{u},Z_{u})+\sum_{d=r+1}^{k}\epsilon_{d}h(\overline{\phi}W_{d},\overline{\phi}W_{d}).
 \end{align}
Applying (\ref{h1}) to (\ref{K1}) and replacing $Z$ with $E_{i}$ in (\ref{metric}) we derive 
\begin{align}\label{K3}
         (\nabla_X g)(Y,E_{i})=\sum_{i=1}^{r} h_i^l(X,Y)\lambda_i(E_{i})=\overline{g}(h^{l}(X,Y),E_{i}),
\end{align}
for any $X,Y\in\Gamma(S(TM))$.
Then using (\ref{K3}) and the assumption $\nabla$ is a metric connection we get
 \begin{align}\label{K4}
  \mathrm{trace}(h)|_{S(TM)}&=\sum_{t=1}^{2l}\frac{\epsilon_{t}}{n}\sum_{\alpha=r+1}^{n}\epsilon_{\alpha}\overline{g}(h^{s}(X_{t},X_{t}),W_{\alpha})W_{\alpha}\nonumber\\
  &+\sum_{j=2p+1}^{\kappa}\frac{1}{n}\sum_{\alpha=r+1}^{n}\epsilon_{\alpha}\overline{g}(h^{s}(\overline{\phi}E_{j},\overline{\phi}E_{j}),W_{\alpha})W_{\alpha}\nonumber\\
  &+\sum_{j=2p+1}^{\kappa}\frac{1}{n}\sum_{\alpha=r+1}^{n}\epsilon_{\alpha}\overline{g}(h^{s}(\overline{\phi}N_{j},\overline{\phi}N_{j}),W_{\alpha})W_{\alpha}\nonumber\\
  &+\sum_{d=r+1}^{k}\frac{\epsilon_{d}}{n}\sum_{\alpha=r+1}^{n}\epsilon_{\alpha}\overline{g}(h^{s}(\overline{\phi}W_{d},\overline{\phi}W_{d}),W_{\alpha})W_{\alpha}\nonumber\\
  &+\sum_{u=\kappa+1}^{r}\frac{\epsilon_{u}}{n}\sum_{\alpha=r+1}^{n}\epsilon_{\alpha}\overline{g}(h^{s}(Z_{u},Z_{u}),W_{\alpha})W_{\alpha}.
 \end{align}
 Then using (\ref{h1}) we derive
 \begin{equation}\label{K6}
  \overline{g}(h^{s}(X,Y),W_{\alpha})=\epsilon_{\alpha}h_{\alpha}^{s}(X,Y)=g(A_{W_{\alpha}}X,Y),
 \end{equation}
for any $X,Y\in\Gamma(S(TM))$. Finally, replacing (\ref{K6}) in (\ref{K4}) we get 
 \begin{align}\label{K7}
  \mathrm{trace}(h)|_{S(TM)}&=\sum_{t=1}^{2l}\frac{\epsilon_{t}}{n}\sum_{\alpha=r+1}^{n}\epsilon_{\alpha}g(A_{W_{\alpha}}X_{t},X_{t})W_{\alpha}\nonumber\\
  &+\sum_{j=2p+1}^{\kappa}\frac{1}{n}\sum_{\alpha=r+1}^{n}\epsilon_{\alpha}g(A_{W_{\alpha}}\overline{\phi}E_{j},\overline{\phi}E_{j})W_{\alpha}\nonumber\\
  &+\sum_{j=2p+1}^{\kappa}\frac{1}{n}\sum_{\alpha=r+1}^{n}\epsilon_{\alpha}g(A_{W_{\alpha}}\overline{\phi}N_{j},\overline{\phi}N_{j})W_{\alpha}\nonumber\\
  &+\sum_{d=r+1}^{k}\frac{\epsilon_{d}}{n}\sum_{\alpha=r+1}^{n}\epsilon_{\alpha}g(A_{W_{\alpha}}\overline{\phi}W_{d},\overline{\phi} W_{d})W_{\alpha}\nonumber\\
 &+\sum_{u=\kappa+1}^{r}\frac{\epsilon_{u}}{n}\sum_{\alpha=r+1}^{n}\epsilon_{\alpha}\overline{g}(A_{W_{\alpha}}Z_{u},Z_{u})W_{\alpha},
 \end{align}
 from which our assertion follows. Hence the proof.
\end{proof}
 
\begin{example}
{\rm
 Let $(M,g)$ be a submanifold of $\mathbb{R}_{4}^{2m+1}$ given in Example \ref{exa1}. We have shown that $h^{l}(X,Y)=0$ for any $X,Y\in\Gamma(TM)$. Hence, from (\ref{metric}) we can see that the induced connection $\nabla$ is a metric connection. Further, we have also seen that $h(X,Y)=0$ for all $X,Y\in\Gamma(\mathrm{Rad} \,TM)$ and thus, $h^{s}(X,Y)=0$ on $\mathrm{Rad} \,TM$ and also $h^{s}(X,E)=0$ for all $X\in\Gamma(TM)$. Therefore, $M$ is an irrotational minimal ascreen QGCR-lightlike submanifold of $\mathbb{R}_{4}^{2m+1}$ with $\mathrm{trace}(A_{W_{\alpha}})|_{S(TM)}=0$ and thus  satisfying the above theorem.
 }
\end{example}
\begin{corollary}
  Let $(M,g)$ be a totally umbilical irrotational ascreen QGCR-lightlike submanifold of an indefinite nearly $\mu$-Sasakian manifold $(\overline{M},\overline{g})$. If $\nabla$  is a metric connection, then $M$ is minimal if the mean curvature vectors  $\mathcal{H}^{s}=0$.
\end{corollary}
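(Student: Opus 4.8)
The plan is to obtain the corollary as a direct specialization of the preceding theorem, the only extra ingredient being the totally umbilical hypothesis, which forces the screen fundamental form to be proportional to the induced metric. Recall that a lightlike submanifold is totally umbilical in the sense of \cite{ds2} when its second fundamental form satisfies $h(X,Y)=\mathcal{H}\,g(X,Y)$ for all $X,Y\in\Gamma(TM)$, where $\mathcal{H}\in\Gamma(\mathrm{tr}(TM))$ is the mean curvature vector. Splitting $\mathcal{H}=\mathcal{H}^{l}+\mathcal{H}^{s}$ along the decomposition (\ref{eq08}), this is equivalent to the two identities $h^{l}(X,Y)=\mathcal{H}^{l}g(X,Y)$ and $h^{s}(X,Y)=\mathcal{H}^{s}g(X,Y)$, so I would begin by recording these.

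First I would feed the hypothesis $\mathcal{H}^{s}=0$ into the second identity to conclude that $h^{s}(X,Y)=0$ for all $X,Y\in\Gamma(TM)$. In particular $h^{s}$ vanishes on $\mathrm{Rad}\,TM$, so condition (i) of Definition \ref{mini} already holds (this is in any case guaranteed by the irrotational assumption, exactly as in the proof of the theorem). Hence only condition (ii), the vanishing of $\mathrm{trace}(h)|_{S(TM)}$, remains, and this is what the theorem supplies once its hypothesis is checked.

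Next I would invoke relation (\ref{K6}), namely $g(A_{W_{\alpha}}X,Y)=\epsilon_{\alpha}h_{\alpha}^{s}(X,Y)$ for $X,Y\in\Gamma(S(TM))$. Since $h^{s}\equiv 0$, every component $h^{s}_{\alpha}$ vanishes, whence $g(A_{W_{\alpha}}X,Y)=0$ for all $X,Y\in\Gamma(S(TM))$ and each $\alpha$. Evaluating on an orthonormal basis of the non-degenerate distribution $S(TM)$ then yields $\mathrm{trace}(A_{W_{\alpha}})|_{S(TM)}=0$ for every $\alpha$. As $M$ is an irrotational ascreen QGCR-lightlike submanifold whose induced connection $\nabla$ is metric, the hypotheses of the theorem are satisfied, and the conclusion $\mathrm{trace}(A_{W_{\alpha}})|_{S(TM)}=0$ forces $M$ to be minimal.

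There is no serious obstacle here, since the statement is essentially a specialization of the theorem. The only point demanding care is the bookkeeping of the totally umbilical condition: one must confirm that the screen part $\mathcal{H}^{s}$ of the mean curvature vector is precisely the coefficient governing $h^{s}$, so that its vanishing annihilates the \emph{entire} screen fundamental form rather than merely its trace. Once this is in place, the chain from $\mathcal{H}^{s}=0$ to $\mathrm{trace}(A_{W_{\alpha}})|_{S(TM)}=0$, and then to minimality via the theorem, is immediate.
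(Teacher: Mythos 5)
Your proposal is correct and follows exactly the route the paper intends: the paper states this corollary without proof as an immediate specialization of the preceding theorem, and your chain $\mathcal{H}^{s}=0 \Rightarrow h^{s}\equiv 0$ (by the totally umbilical identity $h^{s}(X,Y)=\mathcal{H}^{s}g(X,Y)$) $\Rightarrow g(A_{W_{\alpha}}X,Y)=0$ on $S(TM)$ via (\ref{K6}) $\Rightarrow \mathrm{trace}(A_{W_{\alpha}})|_{S(TM)}=0$ is precisely that specialization. Your care in noting that $\mathcal{H}^{s}$ governs the full screen fundamental form (not just its trace), and that condition (i) of Definition \ref{mini} is already secured by irrotationality, fills in the details the paper leaves implicit.
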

 \section{Co-screen QGCR-lightlike submanifolds}\label{Co}
In this section, we  study a special class of QGCR-lightlike submanifolds of indefinite nearly $\mu$-Sasakian manifolds, called \textit{co-screen QGCR-lightlike submanifold}.  

\begin{definition}\label{coc}
 {\rm
 Let $(M,g)$ be a QGCR-lightlike submanifold of an indefinte almost contact manifold $(\overline{M},\overline{g})$. We say that $M$ is a co-screen QGCR-lightlike submanifold of $\overline{M}$ if $\xi\in\Gamma(S(TM^{\perp}))$. 
 }
\end{definition}
From  Definition \ref{def2} of QGCR-lightlike submanifold we notice that if $M$ is a co-screen QGCR-lightlike submanifold then the direct sum in (\ref{s81}) reduces to the orthogonal sum $\overline{\phi} \,\overline{D}=  \mathcal{S}\perp \mathcal{L}$. Note that this condition is also satisfied by GCR-lightlike submanifols though $\xi\in\Gamma(S(TM))$. 

In the case of co-screen QGCR, the tangent bundle of $M$ is decomposed as follows; 
\begin{equation}
 TM=D\oplus\overline{D},\;\; \mbox{with}\;\; D=\mathrm{Rad}\, TM\perp D_{0}\perp\overline{\phi}D_{2}.
\end{equation}
The transversal bundle can also be decomposed as
\begin{equation}
\mathrm{tr}(TM)= \overline{\phi}\, \overline{D}\perp \mathcal{G}\perp \mathbb{R}\xi, 
\end{equation}
where $\mathcal{G}$ is an $\overline{\phi}$-invariant distribution, i.e., $\overline{\phi}\mathcal{G}=\mathcal{G}$, and $\mathbb{R}\xi$ is  line bundle spanned by $\xi$.

 Next, we construct an example of a co-screen QGCR-lightlike submanifold of a special nearly $\mu$-Sasakian manifold   $\overline{M}$ in which  $\overline{H}=0$ and $\mu=1$. More precisely, we take $\overline{M}$ to be a Sasakian manifold with the usual contact structure given in \cite{ds2}.
 
 \begin{example}\label{exa11}
{\rm
Let $\overline{M}=(\mathbb{R}_4^{13}, \overline{g})$ be a semi-Euclidean space,  where $\overline{g}$ is of signature $(-,-,+,+,+,+, - ,  -,+,+,+,+,+)$ with respect to the canonical basis
\begin{equation*}
 (\partial x_{1},\partial x_{2},\partial x_{3},\partial x_{4},\partial x_{5},\partial x_{6},\partial y_{1},\partial y_{2},\partial y_{3},\partial y_{4},\partial y_{5},\partial y_{6},\partial z).
\end{equation*}
Let $(M,g)$ be a submanifold of $\overline{M}$ given by 
\begin{equation*}
 x^{1}=y^{4},\;\; y^{1}=-x^{4},\;\; x^{2}= y^{3},\;\; y^5=(x^5)^{\frac{1}{2}}, \;\;\mbox{and}\;\;z=constant.
 \end{equation*}
 By direct calculations, we can easily check that the vector fields
\begin{align*} 
 E_{1} & = \partial x_{4}+\partial y_{1}+y^4\partial z,\;\;\;\; E_{2}=\partial x_{1}-\partial y_{4}+y^1\partial z,\\
 E_{3} & = \partial x_{2} + \partial y_{3} + y^{2} \partial z, \;\;\;\; X_{1}=2y^{5}\partial x_{5}+\partial y_{5}+2(y^5)^{2}\partial z,\\ 
 X_{2} & = \partial x_{3} - \partial y_{2}+y^{3}\partial z,\;\;\;\;  X_{3}=2(\partial x_{3} + \partial y_{2} + y^{3} \partial z),\\
 X_{4} & = 2\partial y_{6}\;\;\;\;\mbox{and}\;\;\;\; X_{5}=2(\partial x_{6}+y^{6}\partial z),
\end{align*}
form a local frame of $TM$. From this, we can see that $\mathrm{Rad} \, TM$ is spanned by $\{E_{1}, E_{2}, E_{3}\}$, and therefore, $M$ is 3-lightlike. Further, $\overline{\phi}_{0} E_{1}=E_{2}$, therefore we set $D_{1}=\mbox{span}\{E_{1},E_2\}$. Also $\overline{\phi}_{0} E_{3}=X_{2}$ and thus $D_{2}=\mathrm{span}\{E_{3}\}$. It is easy to see that $\overline{\phi}_{0} X_{4}=X_{5}$, so we set $D_{0}=\mathrm{span}\{X_{4},X_{5}\}$. On the other hand, following direct calculations, we have 
\begin{align} 
 N_{1} & =2(\partial x_{4}-\partial y_{1}+y^{4}\partial z),\;\; \;  N_{2}=2(-\partial x_{1}-\partial y_{4}+y^{1}\partial z),\nonumber\\
 N_{3} & =2(- \partial x_{2} + \partial y_{3} + y^{2} \partial z),\;\;  W_{1}=\partial x_{5}-2y^{5}\partial y_{5}+y^{5}\partial z,\nonumber\\
 \mbox{and}&\;\; W_{2}=2\partial z, \nonumber
\end{align}
from which $l\mathrm{tr}(TM)=\mathrm{span}\{N_{1},N_{2},N_{3}\}$ and $S(TM^{\perp})=\mathrm{span}\{W_{1}, W_{2}\}$. Clearly, $\overline{\phi}_{0} N_{2}=-N_{1}$. Further, $\overline{\phi}_{0} N_{3}= X_{3}$ and thus $\mathcal{L}=\mbox{Span}\{N_3\}$. Also, $\overline{\phi}_{0} W_{1}=-X_{1}$ and therefore $\mathcal{S}=\mbox{span}\{W_{1}\}$. Notice that, $\overline{\phi}_{0}W_{2}=0$ and $\overline{g}(W_{2},W_{2})=1$ hence $\xi=W_{2}$. Clearly, $M$ is a co-screen QGCR-lightlike submanifold of $\overline{M}$ satisfying the hypothesis of Definition \ref{coc}.
}
\end{example}
Let $(M,g)$ be a co-screen QGCR-lightlike submanifold of an indefinite nearly $\mu$-Sasakian manifold, $(\overline{M},\overline{g})$, and let $S$ and $R$ be the projections of $TM$ on to $D$ and $\overline{D}$ respectively, while $F$ and $Q$ are projections of $\mathrm{tr}(TM)$ on to $\overline{\phi}\, \overline{D}$ and $\mathcal{G}$ respectively. Then, 
\begin{equation}\label{mas28}
 X=SX+RX\;\;\mbox{and}\;\;V=FV+QV+\eta(V)\xi,
\end{equation}
for any $X\in\Gamma(TM)$ and $V\in\Gamma(\mathrm{tr}(TM))$.

Applying $\overline{\phi}$ to the two equations of (\ref{mas28}), we respectively derive 
\begin{equation}\label{mas29}
 \overline{\phi}X=\phi_{1}X+\varphi_{1}X\;\;\mbox{and}\;\;\overline{\phi}V=\phi_{2} V+\varphi_{2}V,
\end{equation}
where $\{\phi_{1}X, \phi_{2} V\}$  and $\{\varphi_{1}X,\varphi_{2}V\}$ respectively belongs to $TM$ and $\mathrm{tr}(TM)$.

Using the nearly $\mu$-Sasakian condition (\ref{eqz}) and equations (\ref{mas29}) and (\ref{eq11})-(\ref{eq32}), we derive 
\begin{align}\label{mas36}
 &-A_{\varphi_{1}X}Y-A_{\varphi_{1}Y}X+\nabla_{X}\phi_{1}Y+\nabla_{Y}\phi_{1}X\nonumber\\
 &+\nabla^{t}_{X}\varphi_{1}Y+\nabla^{t}_{Y}\varphi_{1}X+ h(X,\phi_{1}Y)+h(Y,\phi_{1}X)\nonumber\\
 &-\phi_{1}\nabla_{X}Y-\phi_{1}\nabla_{X}Y-\varphi_{1}\nabla_{X}Y-\varphi_{1}\nabla_{X}Y\\
 &-2\phi_{2}h(X,Y)-2\varphi_{2} h(X,Y)-2\mu\overline{g}(X,Y)\xi=0,\nonumber
\end{align}
for all $X,Y\in\Gamma(TM)$.
Then, comparing the tangential and transversal components in (\ref{mas36}), we get;
\begin{align}\label{mas37}
&\nabla_{X}\phi_{1}Y+\nabla_{Y}\phi_{1}X-A_{\varphi_{1}X}Y-A_{\varphi_{1}Y}X\nonumber\\
&-\phi_{1}\nabla_{X}Y-\phi_{1}\nabla_{X}Y-2\phi_{2}h(X,Y)=0,
\end{align}
 and       
\begin{align}\label{mas38}
&\nabla^{t}_{X}\varphi_{1}Y+\nabla^{t}_{Y}\varphi_{1}X+ h(X,\phi_{1}Y)+h(Y,\phi_{1}X)\nonumber \\ 
-\varphi_{1}&\nabla_{X}Y-\varphi_{1}\nabla_{X}Y-2\varphi_{2}h(X,Y)-2\mu\overline{g}(X,Y)\xi=0,
\end{align}
respectively.
\begin{theorem}
  Let $(M,g)$ be a co-screen QGCR-lightlike submanifold of an indefinite nearly $\mu$-Sasakian manifold $\overline{M}$. Then, 
  \begin{enumerate}
   \item $D$ is integrable if and only if 
       \begin{equation}\nonumber
         h(X,\phi_{1}Y)+h(Y,\phi_{1}X)=2\varphi_{1}\nabla_{Y}X+2\varphi_{2} h(X,Y)+2\mu\overline{g}(X,Y)\xi,
       \end{equation}
   for all $X,Y\in\Gamma(D)$.
  
  \item $\overline{D}$ is integrable if and only if 
  \begin{equation}\nonumber
      A_{\varphi_{1}X}Y+A_{\varphi_{1}Y}X=-2\phi_{1}\nabla_{Y}X-2\phi_{2}h(X,Y),
  \end{equation}
  for all $X,Y\in\Gamma(\overline{D})$.
  \end{enumerate}
\end{theorem}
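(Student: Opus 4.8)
The plan is to exploit the way $\overline{\phi}$ acts on the two summands of $TM=D\oplus\overline{D}$, together with the symmetrized identities (\ref{mas37}) and (\ref{mas38}). First I would record the structural fact that, in the co-screen situation, $D=\mathrm{Rad}\,TM\perp D_{0}\perp\overline{\phi}D_{2}$ is $\overline{\phi}$-invariant: indeed $\overline{\phi}D_{1}=D_{1}$, $\overline{\phi}D_{0}\subseteq D_{0}$, $\overline{\phi}(\overline{\phi}D_{2})=-D_{2}$ (using $\overline{\phi}^{2}=-\mathbb{I}+\eta\otimes\xi$ and $\eta|_{\mathrm{Rad}\,TM}=0$, which holds because $\xi\in\Gamma(S(TM^{\perp}))$ is orthogonal to $\mathrm{Rad}\,TM$), and $\overline{\phi}D_{2}\subset D$ by construction. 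Hence for $X\in\Gamma(D)$ one has $\varphi_{1}X=0$ and $\phi_{1}X=\overline{\phi}X$. Dually, since $\overline{\phi}\,\overline{D}=\mathcal{S}\perp\mathcal{L}\subset\Gamma(\mathrm{tr}(TM))$, for $X\in\Gamma(\overline{D})$ one has $\phi_{1}X=0$ and $\varphi_{1}X=\overline{\phi}X$. Because $\xi\notin\Gamma(TM)$ in the co-screen case, $\overline{\phi}$ has trivial kernel on both $D$ and $\overline{D}$, so $\varphi_{1}|_{\overline{D}}$ and $\phi_{1}|_{D}$ are injective.

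These observations give the characterizations I will use. For $X,Y\in\Gamma(D)$, since $\varphi_{1}$ annihilates $D$ and is injective on $\overline{D}$, the bracket $[X,Y]\in\Gamma(TM)$ lies in $\Gamma(D)$ if and only if $\varphi_{1}[X,Y]=0$; symmetrically, for $X,Y\in\Gamma(\overline{D})$, $[X,Y]\in\Gamma(\overline{D})$ if and only if $\phi_{1}[X,Y]=0$. I also recall that the induced connection is torsion-free, $\nabla_{X}Y-\nabla_{Y}X=[X,Y]$, since the second fundamental form $h$ in (\ref{eq11}) is symmetric.

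For part (1), I substitute $\varphi_{1}X=\varphi_{1}Y=0$ and $\phi_{1}=\overline{\phi}$ into the transversal identity (\ref{mas38}); the $\nabla^{t}$-terms drop out, leaving
\begin{equation}\nonumber
 h(X,\phi_{1}Y)+h(Y,\phi_{1}X)=\varphi_{1}(\nabla_{X}Y+\nabla_{Y}X)+2\varphi_{2}h(X,Y)+2\mu\overline{g}(X,Y)\xi.
\end{equation}
Writing $\nabla_{X}Y+\nabla_{Y}X=2\nabla_{Y}X+[X,Y]$ and applying the linear map $\varphi_{1}$ turns the right-hand side into $2\varphi_{1}\nabla_{Y}X+\varphi_{1}[X,Y]+2\varphi_{2}h(X,Y)+2\mu\overline{g}(X,Y)\xi$. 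By the characterization above, $D$ is integrable precisely when $\varphi_{1}[X,Y]=0$, which is exactly the stated condition. Part (2) is entirely parallel: I put $\phi_{1}X=\phi_{1}Y=0$ into the tangential identity (\ref{mas37}), obtain $A_{\varphi_{1}X}Y+A_{\varphi_{1}Y}X=-\phi_{1}(\nabla_{X}Y+\nabla_{Y}X)-2\phi_{2}h(X,Y)$, rewrite the symmetric sum via torsion-freeness to isolate $-2\phi_{1}\nabla_{Y}X-\phi_{1}[X,Y]$, and read off that integrability of $\overline{D}$ is equivalent to $\phi_{1}[X,Y]=0$, i.e. to the displayed equation.

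The one genuinely non-mechanical point, and the main obstacle, is the mismatch of symmetry: the nearly-$\mu$-Sasakian identities (\ref{mas37})--(\ref{mas38}) arise from symmetrizing in $X$ and $Y$, so they only control the symmetric combination $\nabla_{X}Y+\nabla_{Y}X$, whereas integrability is a statement about the skew bracket $[X,Y]$. The bridge is the torsion-free identity $\nabla_{X}Y+\nabla_{Y}X=2\nabla_{Y}X+[X,Y]$, which lets me peel off the bracket term and reduce each integrability criterion to the vanishing of $\varphi_{1}[X,Y]$ (resp. $\phi_{1}[X,Y]$); verifying that these vanishings are genuinely equivalent to $[X,Y]$ lying in the correct summand is precisely what requires the injectivity of $\overline{\phi}$ on $\overline{D}$ and on $D$ established at the outset.
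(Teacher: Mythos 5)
Your proof is correct and takes essentially the same route as the paper: you specialize the symmetrized identities (\ref{mas37}) and (\ref{mas38}) to $\Gamma(D)$ and $\Gamma(\overline{D})$, use torsion-freeness to rewrite $\nabla_{X}Y+\nabla_{Y}X$ as $2\nabla_{Y}X+[X,Y]$, and thereby recover exactly the paper's intermediate equations (\ref{mas40}) and (\ref{mas41}). Your only addition is to make explicit what the paper leaves implicit --- that $\varphi_{1}$ vanishes on $D$ and $\phi_{1}$ on $\overline{D}$, and that both restrict injectively to the complementary summands because $\ker\overline{\phi}=\mathbb{R}\xi$ with $\xi\in\Gamma(S(TM^{\perp}))$ disjoint from $TM$ --- which is precisely what justifies reading integrability off from $\varphi_{1}[X,Y]=0$ and $\phi_{1}[X,Y]=0$.
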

\begin{proof}
 Using (\ref{mas37}) and (\ref{mas38}), we derive 
 \begin{align}\label{mas40}
         h(X,\phi_{1}Y)+h(Y,\phi_{1}X)&=\varphi_{1}[X,Y]+2\varphi_{1}\nabla_{Y}X+2\varphi_{2} h(X,Y)\nonumber\\
         &+2\mu\overline{g}(X,Y)\xi,
       \end{align}
 for all $X,Y\in\Gamma(D)$ and 
 \begin{equation}\label{mas41}
        A_{\varphi_{1}X}Y+A_{\varphi_{1}Y}X=-\phi_{1}[X,Y]-2\phi_{1}\nabla_{Y}X-2\phi_{2}h(X,Y),
  \end{equation}
 for all $X,Y\in\Gamma(\overline{D})$. Then, the assertions follows from (\ref{mas40}) and (\ref{mas41}), which completes the proof.

\end{proof}
Next, we define nearly parallel distributions of submanifolds in semi-Riemannian manifolds.
\begin{definition}\label{defn3}
 Let $(M,g)$ be a submanifold of a semi-Riemannian manifold $(\overline{M},\overline{g})$ and let $\nabla$  be the connection induced in its tangent bundle. Then a distribution $D$ on $M$ will be called nearly parallel if 
       \begin{equation}\nonumber
       \nabla_{X}Y+\nabla_{Y}X\in\Gamma(D),\;\; \forall \,X\in\Gamma(TM) \;\;\mbox{and}\;\; Y\in\Gamma(D).                                                                                                                                                                                                                                                                                                                                                                                                                                           \end{equation}
 \end{definition}
 \begin{lemma}\label{lemma5}
  Let $(M,g)$ be a co-screen QGCR-lightlike submanifold of an indefinite nearly Sasakian manifold $\overline{M}$. Then
  \begin{equation}\nonumber
   \eta(\overline{\nabla}_{X}Y)+\eta(\overline{\nabla}_{Y}X)=0,
  \end{equation}
 for all $X,Y\in\Gamma(TM)$.
 \end{lemma}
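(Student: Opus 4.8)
The plan is to reduce the symmetrized expression $\eta(\overline{\nabla}_{X}Y)+\eta(\overline{\nabla}_{Y}X)$ to a purely algebraic combination of two skew-symmetric operators and then exploit the co-screen hypothesis to discard the remaining non-tensorial derivative terms. Since $\eta(\cdot)=\overline{g}(\xi,\cdot)$ by (\ref{equa2}) and $\overline{\nabla}$ is metric, I would first write
\[
\eta(\overline{\nabla}_{X}Y)=\overline{g}(\xi,\overline{\nabla}_{X}Y)=X(\overline{g}(\xi,Y))-\overline{g}(\overline{\nabla}_{X}\xi,Y),
\]
for all $X,Y\in\Gamma(TM)$, and substitute the structure identity $\overline{\nabla}_{X}\xi=-\mu\overline{\phi}\,X-\overline{H}\,X$ obtained from (\ref{eq10}). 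This gives
\[
\eta(\overline{\nabla}_{X}Y)=X(\eta(Y))+\mu\overline{g}(\overline{\phi}\,X,Y)+\overline{g}(\overline{H}\,X,Y).
\]

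Next I would invoke Definition \ref{coc}: for a co-screen QGCR-lightlike submanifold one has $\xi\in\Gamma(S(TM^{\perp}))$, and since $S(TM^{\perp})$ is a subbundle of $TM^{\perp}$, the structure vector field $\xi$ is orthogonal to $TM$. Consequently $\eta(Y)=\overline{g}(\xi,Y)=0$ for every $Y\in\Gamma(TM)$, so $\eta$ vanishes identically on $TM$ and the derivative terms $X(\eta(Y))$ and $Y(\eta(X))$ both vanish. Adding the expression above to its counterpart with $X$ and $Y$ interchanged therefore leaves
\[
\eta(\overline{\nabla}_{X}Y)+\eta(\overline{\nabla}_{Y}X)=\mu\bigl[\overline{g}(\overline{\phi}\,X,Y)+\overline{g}(\overline{\phi}\,Y,X)\bigr]+\bigl[\overline{g}(\overline{H}\,X,Y)+\overline{g}(\overline{H}\,Y,X)\bigr].
\]

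Finally I would close the argument using skew-symmetry. The operator $\overline{\phi}$ is skew-symmetric with respect to $\overline{g}$ (a direct consequence of (\ref{equa2}) together with $\eta\circ\overline{\phi}=0$), and $\overline{H}$ is skew-symmetric by the properties recorded immediately after (\ref{eq10}). Hence $\overline{g}(\overline{\phi}\,X,Y)+\overline{g}(\overline{\phi}\,Y,X)=0$ and $\overline{g}(\overline{H}\,X,Y)+\overline{g}(\overline{H}\,Y,X)=0$, so the right-hand side collapses to zero and the claim follows. The computation is short, so I do not anticipate a genuine obstacle; the only step that truly requires the co-screen hypothesis is the vanishing of $\eta|_{TM}$, which is precisely what kills the derivative terms $X(\eta(Y))$ and $Y(\eta(X))$ that would otherwise survive (and do survive, in general, when $\xi$ is tangent as in the GCR situation).
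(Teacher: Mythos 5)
Your proposal is correct and is precisely the ``straightforward calculation'' the paper leaves implicit: expand $\eta(\overline{\nabla}_{X}Y)$ via metric compatibility, kill the derivative terms using $\eta|_{TM}=0$ from the co-screen hypothesis $\xi\in\Gamma(S(TM^{\perp}))$, and cancel the remainder by the skew-symmetry of $\overline{\phi}$ and $\overline{H}$ in $\overline{\nabla}_{X}\xi=-\mu\overline{\phi}\,X-\overline{H}\,X$. Your remark that the argument works for arbitrary $\mu$ (not just the nearly Sasakian case $\mu=1$ stated in the lemma) is a correct and mildly sharper observation.
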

\begin{proof}
 The proof follows by straightforward calculations.
\end{proof}

Now, using Definition \ref{defn3} and Lemma \ref{lemma5}, we have the following;
\begin{theorem}\label{theorem2}
  Let $(M,g)$ be a co-screen QGCR-lightlike submanifold of an indefinite nearly $\mu$-Sasakian manifold $\overline{M}$. If  $D$ is nearly parallel, then $h(X,\overline{\phi}Y)+h(Y,\phi_{1}X)+\nabla^{t}_{Y}\varphi_{1}X$ has no component in $(\mathcal{L}\perp\mathcal{S})$ for all $Y\in\Gamma(D)$ and $X\in\Gamma(TM)$.
\end{theorem}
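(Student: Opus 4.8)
The plan is to show that, once $Y$ is restricted to $\Gamma(D)$, the combination in question is governed entirely by the transversal identity (\ref{mas38}), and that the surviving terms on the right all land in $\mathcal{G}\perp\mathbb{R}\xi$, which is orthogonal to $\mathcal{L}\perp\mathcal{S}$.

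First I would record the structural fact that, in the co-screen situation, the distribution $D=\mathrm{Rad}\,TM\perp D_{0}\perp\overline{\phi}D_{2}$ is invariant under $\overline{\phi}$. Indeed $\overline{\phi}D_{1}=D_{1}$, $\overline{\phi}D_{0}\subseteq D_{0}$ and $\overline{\phi}D_{2}\subset S(TM)$, while $\overline{\phi}(\overline{\phi}D_{2})=\overline{\phi}^{2}D_{2}=-D_{2}$, because $\xi\in\Gamma(S(TM^{\perp}))$ forces $\eta$ to vanish on $TM$, and in particular on $\mathrm{Rad}\,TM$. Hence for every $Y\in\Gamma(D)$ one has $\overline{\phi}Y\in\Gamma(D)\subset\Gamma(TM)$, so in the notation of (\ref{mas29}) we get $\varphi_{1}Y=0$ and $\overline{\phi}Y=\phi_{1}Y$. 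In particular $h(X,\overline{\phi}Y)=h(X,\phi_{1}Y)$ and the term $\nabla^{t}_{X}\varphi_{1}Y$ in (\ref{mas38}) drops out.

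Next I would feed such a pair $Y\in\Gamma(D)$, $X\in\Gamma(TM)$ into the transversal equation (\ref{mas38}). Since $\varphi_{1}Y=0$, that identity rearranges to
\begin{align}
h(X,\overline{\phi}Y)+h(Y,\phi_{1}X)+\nabla^{t}_{Y}\varphi_{1}X
&=\varphi_{1}(\nabla_{X}Y+\nabla_{Y}X)\nonumber\\
&\quad+2\varphi_{2}h(X,Y)+2\mu\,\overline{g}(X,Y)\xi,\nonumber
\end{align}
which reduces the claim to controlling the three terms on the right. Invoking the hypothesis that $D$ is nearly parallel, Definition \ref{defn3} gives $\nabla_{X}Y+\nabla_{Y}X\in\Gamma(D)$; since $D$ is $\overline{\phi}$-invariant, $\overline{\phi}(\nabla_{X}Y+\nabla_{Y}X)\in\Gamma(D)\subset\Gamma(TM)$, so its transversal part $\varphi_{1}(\nabla_{X}Y+\nabla_{Y}X)$ vanishes. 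For the remaining two terms I would use the orthogonal splitting $\mathrm{tr}(TM)=(\mathcal{S}\perp\mathcal{L})\perp\mathcal{G}\perp\mathbb{R}\xi$: because $\overline{\phi}\xi=0$, while $\overline{\phi}$ maps $\mathcal{S}\perp\mathcal{L}=\overline{\phi}\,\overline{D}$ into $TM$ and preserves $\mathcal{G}$, the transversal part of $\overline{\phi}V$ always lies in $\mathcal{G}$; hence $\varphi_{2}h(X,Y)\in\Gamma(\mathcal{G})$. Thus both $\varphi_{2}h(X,Y)$ and $\overline{g}(X,Y)\xi$ sit in $\mathcal{G}\perp\mathbb{R}\xi$, which is orthogonal to $\mathcal{L}\perp\mathcal{S}$, and the conclusion follows.

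The main obstacle I anticipate is not the computation but the two structural verifications that make everything collapse: that $D$ is genuinely $\overline{\phi}$-invariant (so nearly parallelism of $D$ annihilates $\varphi_{1}(\nabla_{X}Y+\nabla_{Y}X)$) and that $\varphi_{2}$ always lands in $\mathcal{G}$. Both hinge on the co-screen hypothesis $\xi\in\Gamma(S(TM^{\perp}))$, which makes $\eta$ vanish on the entire tangent bundle and hence $\overline{\phi}^{2}=-\mathbb{I}$ on $TM$; I would establish this point carefully before running the final orthogonality argument.
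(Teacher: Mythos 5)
Your proof is correct, but it follows a genuinely different route from the paper's. The paper does not reuse the transversal identity (\ref{mas38}); instead it derives a fresh equation: it applies $\overline{\phi}$ to the nearly $\mu$-Sasakian condition (\ref{eqz}) with $Y\in\Gamma(D)$, invokes Lemma \ref{lemma5} to cancel the $\eta(\overline{\nabla}_{X}Y)\xi$-terms created by $\overline{\phi}^{2}$, expands via Gauss--Weingarten and (\ref{mas29}) to obtain (\ref{mas44}), and then extracts the \emph{tangential} components, giving (\ref{mas45}); nearly parallelism then forces $\phi_{2}\bigl(h(X,\overline{\phi}Y)+h(\phi_{1}X,Y)+\nabla^{t}_{Y}\varphi_{1}X\bigr)=0$, and the conclusion rests on the fact (left implicit in the paper) that $\phi_{2}$ annihilates $\mathcal{G}\perp\mathbb{R}\xi$ while being injective on $\mathcal{L}\perp\mathcal{S}$. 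You stay on the \emph{transversal} side instead: after the structural observation that the co-screen hypothesis makes $\eta$ vanish on $TM$, hence $D$ is $\overline{\phi}$-invariant and $\varphi_{1}Y=0$ for $Y\in\Gamma(D)$, you simply specialize (\ref{mas38}) and note that $\varphi_{1}$ of a tangent vector is exactly its $(\mathcal{L}\perp\mathcal{S})$-valued part while $\varphi_{2}$ always lands in $\mathcal{G}$; so the $(\mathcal{L}\perp\mathcal{S})$-component of the quantity is precisely $\varphi_{1}(\nabla_{X}Y+\nabla_{Y}X)$, which nearly parallelism kills, the leftover $2\varphi_{2}h(X,Y)+2\mu\overline{g}(X,Y)\xi$ visibly lying in $\mathcal{G}\perp\mathbb{R}\xi$. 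Your argument is shorter, avoids Lemma \ref{lemma5} and the auxiliary computation (\ref{mas44}), and makes the mechanism more transparent; the paper's tangential route buys a template that is reused essentially verbatim for the companion results on $\overline{D}$ and for Theorem \ref{theorem3}. One housekeeping point in your favor: as printed, (\ref{mas36})--(\ref{mas38}) contain the duplicated terms $\varphi_{1}\nabla_{X}Y+\varphi_{1}\nabla_{X}Y$, which must be read as $\varphi_{1}\nabla_{X}Y+\varphi_{1}\nabla_{Y}X$ (the paper's own (\ref{mas40})--(\ref{mas41}) confirm this); you implicitly used the corrected symmetric form, which is the right reading.
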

\begin{proof}
 Using (\ref{eqz}), (\ref{mas29}) and Lemma \ref{lemma5}, we derive
 \begin{align}\label{mas44}
 &-\phi_{1}A_{\varphi_{1}X}Y-\varphi_{1}A_{\varphi_{1}X}Y+\phi_{1}\nabla_{X}\overline{\phi}Y+\varphi_{1}\nabla_{X}\overline{\phi}Y\nonumber\\
 &+\phi_{1}\nabla_{Y}\phi_{1}X+\varphi_{1}\nabla_{Y}\phi_{1}X+\nabla_{Y}X+\nabla_{X}Y+\phi_{2}\nabla_{Y}^{t}\varphi_{1}X\nonumber\\
 &+\varphi_{2}\nabla_{Y}^{t}\varphi_{1}X+2h(X,Y)+\phi_{2}h(X,\overline{\phi}Y)+\varphi_{2}h(X,\overline{\phi}Y)\nonumber\\
 &+\phi_{2}h(\phi_{1}X,Y)+\varphi_{2}h(\phi_{1}X,Y)=0,
 \end{align}
for all $Y\in\Gamma(D)$ and $X\in\Gamma(TM)$. Then, taking the tangential components of (\ref{mas44}), we get 
\begin{align}\label{mas45}
 &-\phi_{1}A_{\varphi_{1}X}Y+\phi_{1}\nabla_{X}\overline{\phi}Y+\phi_{1}\nabla_{Y}\phi_{1}X+\phi_{2}\nabla_{Y}^{t}\varphi_{1}X\nonumber\\
 &+\nabla_{Y}X+\nabla_{X}Y+\phi_{2}h(X,\overline{\phi}Y)+\phi_{2}h(\phi_{1}X,Y)=0,
\end{align}
for all $Y\in\Gamma(D)$ and $X\in\Gamma(TM)$. The result follows from (\ref{mas45}), using the fact that $D$ is nearly parallel.
\end{proof}
\begin{theorem}
  Let $(M,g)$ be a co-screen QGCR-lightlike submanifold of an indefinite nearly $\mu$-Sasakian manifold $\overline{M}$. If  $\overline{D}$ is nearly parallel, then $-A_{\varphi_{1}X}Y-A_{\overline{\phi}Y}X+\nabla_{Y}\phi_{1}X$ has no component in $D$ for all $Y\in\Gamma(\overline{D})$ and $X\in\Gamma(TM)$.
\end{theorem}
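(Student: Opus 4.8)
The plan is to follow the same route as the proof of Theorem \ref{theorem2}, but with the roles of the tangential and transversal projections interchanged, specialising the resulting master identity to $Y\in\Gamma(\overline{D})$. Since $M$ is co-screen, $\xi\in\Gamma(S(TM^{\perp}))$ is orthogonal to $TM$, so $\eta(Z)=\overline{g}(\xi,Z)=0$ for every $Z\in\Gamma(TM)$; in particular the right-hand side of (\ref{eqz}) collapses to $2\mu\overline{g}(X,Y)\xi$ for $X,Y\in\Gamma(TM)$. Moreover, for $Y\in\Gamma(\overline{D})$ one has $\overline{\phi}Y\in\Gamma(\mathcal{S}\perp\mathcal{L})\subset\Gamma(\mathrm{tr}(TM))$, whence $\phi_{1}Y=0$ and $\overline{\phi}Y=\varphi_{1}Y$; this is exactly what makes the operator $A_{\overline{\phi}Y}=A_{\varphi_{1}Y}$ appear in the statement.

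First I would produce the master equation. Applying $\overline{\phi}$ to (\ref{eqz}) after expanding the covariant derivatives of $\overline{\phi}$, and using $\overline{\phi}^{2}=-\mathbb{I}+\eta\otimes\xi$ from (\ref{equa1}) together with $\overline{\phi}\xi=0$, the only surviving $\xi$-term is $-[\eta(\overline{\nabla}_{X}Y)+\eta(\overline{\nabla}_{Y}X)]\xi$, which vanishes by Lemma \ref{lemma5}, leaving
\begin{equation}\nonumber
 \overline{\phi}\,\overline{\nabla}_{X}\overline{\phi}Y+\overline{\phi}\,\overline{\nabla}_{Y}\overline{\phi}X+\overline{\nabla}_{X}Y+\overline{\nabla}_{Y}X=0.
\end{equation}
Expanding each term by the Gauss--Weingarten equations (\ref{eq11})--(\ref{eq32}) and the decompositions (\ref{mas29}), and then retaining only the tangential part, yields (after using $\phi_{1}Y=0$, $\overline{\phi}Y=\varphi_{1}Y$) a tangential identity of the shape
\begin{equation}\nonumber
 \phi_{1}\big(-A_{\varphi_{1}X}Y-A_{\varphi_{1}Y}X+\nabla_{Y}\phi_{1}X\big)+\phi_{2}\big(\nabla^{t}_{X}\varphi_{1}Y+\nabla^{t}_{Y}\varphi_{1}X+h(Y,\phi_{1}X)\big)+\nabla_{X}Y+\nabla_{Y}X=0.
\end{equation}
This is the exact analogue of (\ref{mas45}) and should be derivable verbatim from the computation behind (\ref{mas44})--(\ref{mas45}), with $Y$ now taken in $\overline{D}$ rather than in $D$.

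Next I would separate this identity according to $TM=D\oplus\overline{D}$. The key structural facts are $\mathrm{Im}(\phi_{1})=D$ and $\mathrm{Im}(\phi_{2})=\overline{D}$: for $Z\in\Gamma(TM)$ written as $Z=SZ+RZ$ one has $\overline{\phi}(SZ)\in\Gamma(D)$ and $\overline{\phi}(RZ)\in\Gamma(\mathcal{S}\perp\mathcal{L})$, so $\phi_{1}Z=\overline{\phi}(SZ)\in\Gamma(D)$, while the $\overline{\phi}$-image of the $(\mathcal{S}\perp\mathcal{L})$-part of a transversal vector lands in $\overline{D}$. Since $\overline{D}$ is nearly parallel and $Y\in\Gamma(\overline{D})$, Definition \ref{defn3} places $\nabla_{X}Y+\nabla_{Y}X$ in $\Gamma(\overline{D})$. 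Hence the displayed identity reads $(\text{$D$-part})+(\text{$\overline{D}$-part})=0$ with $D\cap\overline{D}=\{0\}$, so each part vanishes separately; the vanishing of the $D$-part gives $\phi_{1}\big(-A_{\varphi_{1}X}Y-A_{\overline{\phi}Y}X+\nabla_{Y}\phi_{1}X\big)=0$.

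The last step, and the only genuinely delicate point, is to pass from $\phi_{1}(\,\cdot\,)=0$ back to the geometric conclusion. The computation above shows $\phi_{1}Z=\overline{\phi}(SZ)$, and $\overline{\phi}$ restricts to an isomorphism of $D$ (it is an isomorphism on each of $D_{1}$ and $D_{0}$ and interchanges $D_{2}$ with $\overline{\phi}D_{2}$, using $\overline{\phi}^{2}=-\mathbb{I}$ on $TM$ since $\eta|_{TM}=0$), so $\phi_{1}Z=0$ if and only if $SZ=0$, i.e.\ $Z$ has no component in $D$. Applying this to $Z=-A_{\varphi_{1}X}Y-A_{\overline{\phi}Y}X+\nabla_{Y}\phi_{1}X$ gives precisely the assertion. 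I expect the main obstacle to be bookkeeping: correctly isolating the tangential part of the expanded master equation and checking that every surviving term carries either a $\phi_{1}$-prefactor (landing in $D$) or a $\phi_{2}$-prefactor (landing in $\overline{D}$), so that the clean $D/\overline{D}$ splitting is available; the injectivity argument for $\overline{\phi}|_{D}$ is then routine.
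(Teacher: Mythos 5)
Your proposal is correct and takes essentially the route the paper intends: the paper omits this proof entirely, stating only that it is similar to that of Theorem \ref{theorem2}, and your argument is exactly that analogue --- the master identity from (\ref{eqz}), $\overline{\phi}^{2}=-\mathbb{I}+\eta\otimes\xi$ and Lemma \ref{lemma5}, Gauss--Weingarten expansion with $\phi_{1}Y=0$, $\varphi_{1}Y=\overline{\phi}Y$ for $Y\in\Gamma(\overline{D})$, tangential projection, and the $D\oplus\overline{D}$ splitting forced by near-parallelism of $\overline{D}$. Your final step, passing from $\phi_{1}(\,\cdot\,)=0$ to the absence of a $D$-component via injectivity of $\overline{\phi}|_{D}$ (valid since $\eta|_{TM}=0$ in the co-screen case), even makes explicit a detail the paper's sketch of Theorem \ref{theorem2} leaves implicit.
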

\begin{proof}
 The proof is similar to that in Theorem \ref{theorem2} and therefore we leave it out.
\end{proof}
Using the idea of \cite{MM}, we define nearly auto-parallel distributons on submanifolds of semi-Riemannian manifolds.
\begin{definition}\label{defn4}
 Let $(M,g)$ be a submanifold of a semi-Riemannian manifold $(\overline{M},\overline{g})$ and let $\nabla$  be the connection induced in its tangent bundle. Then a distribution $D$ on $M$ will be called nearly auto-parallel if 
       \begin{equation}\nonumber
       \nabla_{X}Y+\nabla_{Y}X\in\Gamma(D),\;\; \forall \, X,\,Y\in\Gamma(D).                                                                                                                                                                                                                                                                                                                                                                                                                                           \end{equation}
 \end{definition}
\begin{theorem}\label{theorem3}
  Let $(M,g)$ be a co-screen QGCR-lightlike submanifold of an indefinite nearly $\mu$-Sasakian manifold $\overline{M}$. If  $D$ is nearly auto-parallel, then $h(X,\overline{\phi}Y)+h(Y,\phi_{1}X)$ has no component in $(\mathcal{L}\perp\mathcal{S})$ for all $X,Y\in\Gamma(D)$.
\end{theorem}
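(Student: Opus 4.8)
The plan is to read the statement directly off the transversal component $(\ref{mas38})$ of the structure equation, specialised to $X,Y\in\Gamma(D)$. The first thing I would record is that, in the co-screen setting, $D$ is $\overline{\phi}$-invariant: indeed $\overline{\phi}D_{1}=D_{1}$, $\overline{\phi}D_{0}\subseteq D_{0}$, and $\overline{\phi}(\overline{\phi}D_{2})=-D_{2}$, using $\overline{\phi}^{2}=-\mathbb{I}+\eta\otimes\xi$ together with $\eta|_{\mathrm{Rad}\,TM}=0$ (which holds because $\xi\in\Gamma(S(TM^{\perp}))$ is orthogonal to $TM$). Consequently, for $X,Y\in\Gamma(D)$ one has $\overline{\phi}X,\overline{\phi}Y\in\Gamma(D)\subset\Gamma(TM)$, so that $\varphi_{1}X=\varphi_{1}Y=0$ and $\phi_{1}X=\overline{\phi}X$, $\phi_{1}Y=\overline{\phi}Y$.

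Substituting this into $(\ref{mas38})$, the terms $\nabla^{t}_{X}\varphi_{1}Y$ and $\nabla^{t}_{Y}\varphi_{1}X$ vanish — this is exactly why the conclusion of Theorem \ref{theorem3} drops the $\nabla^{t}_{Y}\varphi_{1}X$ term that is present in Theorem \ref{theorem2} — leaving
\[
 h(X,\overline{\phi}Y)+h(Y,\phi_{1}X)=\varphi_{1}(\nabla_{X}Y+\nabla_{Y}X)+2\varphi_{2}h(X,Y)+2\mu\,\overline{g}(X,Y)\xi.
\]
I would then invoke the hypothesis that $D$ is nearly auto-parallel: since $X,Y\in\Gamma(D)$, Definition \ref{defn4} gives $\nabla_{X}Y+\nabla_{Y}X\in\Gamma(D)$, and the $\overline{\phi}$-invariance of $D$ forces $\varphi_{1}(\nabla_{X}Y+\nabla_{Y}X)=0$. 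Hence $h(X,\overline{\phi}Y)+h(Y,\phi_{1}X)=2\varphi_{2}h(X,Y)+2\mu\,\overline{g}(X,Y)\xi$.

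It remains to locate the right-hand side inside the transversal splitting $\mathrm{tr}(TM)=(\mathcal{L}\perp\mathcal{S})\perp\mathcal{G}\perp\mathbb{R}\xi$. The term $2\mu\,\overline{g}(X,Y)\xi$ lies in $\mathbb{R}\xi$ and so has no $(\mathcal{L}\perp\mathcal{S})$-component. For the other term I would verify that $\varphi_{2}$ maps $\mathrm{tr}(TM)$ into $\mathcal{G}$: writing $V=FV+QV+\eta(V)\xi$ as in $(\ref{mas28})$, one has $\overline{\phi}\xi=0$, $\overline{\phi}(FV)\in\overline{\phi}(\overline{\phi}\,\overline{D})=-\overline{D}\subset\Gamma(TM)$ (purely tangential, as $\eta|_{\overline{D}}=0$), while $\overline{\phi}(QV)\in\mathcal{G}$ because $\overline{\phi}\mathcal{G}=\mathcal{G}$; thus the transversal part $\varphi_{2}V=\overline{\phi}(QV)$ always lands in $\mathcal{G}$. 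In particular $\varphi_{2}h(X,Y)\in\Gamma(\mathcal{G})$, so the whole right-hand side sits in $\Gamma(\mathcal{G}\perp\mathbb{R}\xi)$, which is orthogonal to $\mathcal{L}\perp\mathcal{S}$. This gives the claim.

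The computation is routine once $(\ref{mas38})$ is available; the only genuine content is the two invariance bookkeeping steps — that $\varphi_{1}$ annihilates $D$ (so that the nearly auto-parallel hypothesis is precisely what kills $\varphi_{1}(\nabla_{X}Y+\nabla_{Y}X)$), and that $\varphi_{2}$ lands in $\mathcal{G}$. I expect the latter, tracking the image of $\varphi_{2}$ through $\overline{\phi}^{2}=-\mathbb{I}+\eta\otimes\xi$ and the orthogonal decomposition of $\mathrm{tr}(TM)$, to be the main point to check carefully; everything else parallels the proof of Theorem \ref{theorem2}, with $X\in\Gamma(D)$ replacing $X\in\Gamma(TM)$.
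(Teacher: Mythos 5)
Your proof is correct, but it takes a genuinely different route from the paper's. The paper never touches the transversal equation (\ref{mas38}): instead it applies $\overline{\phi}$ afresh to the nearly $\mu$-Sasakian identity restricted to $D$, uses Lemma \ref{lemma5} to dispose of the $\eta$-terms produced by $\overline{\phi}^{2}=-\mathbb{I}+\eta\otimes\xi$, arrives at (\ref{mas47}), and then takes the \emph{tangential} components to get (\ref{mas48}); there the nearly auto-parallel hypothesis places $\nabla_{X}Y+\nabla_{Y}X$, together with the $\phi_{1}$-terms, inside $D$, while $\phi_{2}h(X,\overline{\phi}Y)+\phi_{2}h(Y,\overline{\phi}X)$ lies in $\overline{D}$ (since $\phi_{2}=\overline{\phi}\circ F$ carries $\mathcal{S}\perp\mathcal{L}$ into $\overline{D}$), so the splitting $TM=D\oplus\overline{D}$ forces $\phi_{2}\bigl(h(X,\overline{\phi}Y)+h(Y,\overline{\phi}X)\bigr)=0$, and the injectivity of $\overline{\phi}$ on $\mathcal{S}\perp\mathcal{L}$ (where $\eta$ vanishes) converts this into the absence of an $(\mathcal{L}\perp\mathcal{S})$-component. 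You instead specialise the already-available transversal equation (\ref{mas38}) to $X,Y\in\Gamma(D)$, use $\varphi_{1}|_{D}=0$ plus the auto-parallel hypothesis to kill $\varphi_{1}(\nabla_{X}Y+\nabla_{Y}X)$, and locate the remainder $2\varphi_{2}h(X,Y)+2\mu\overline{g}(X,Y)\xi$ in $\mathcal{G}\perp\mathbb{R}\xi$ (correctly reading the duplicated term in (\ref{mas38}) as the typo $-\varphi_{1}\nabla_{X}Y-\varphi_{1}\nabla_{Y}X$). Your route buys economy — no Lemma \ref{lemma5}, no re-derivation of (\ref{mas47}) — and strictly more information: an exact formula for the transversal value $h(X,\overline{\phi}Y)+h(Y,\overline{\phi}X)$, including where the $\mu$-term lives ($\mathbb{R}\xi$), rather than only the vanishing of one component. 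What the paper's route buys is uniformity of template with Theorem \ref{theorem2} (apply $\overline{\phi}$, take tangential parts). Your two bookkeeping steps — the $\overline{\phi}$-invariance of $D$ in the co-screen case and $\varphi_{2}(\mathrm{tr}(TM))\subseteq\mathcal{G}$ — are both verified correctly and are in any case implicitly needed by the paper's argument, the latter in the dual form that $\phi_{2}$ annihilates $\mathcal{G}\perp\mathbb{R}\xi$ and is injective on $\mathcal{S}\perp\mathcal{L}$.
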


\begin{proof}
  Using (\ref{eqz}), equations (\ref{mas29}) and Lemma \ref{lemma5}, we derive
  \begin{align}\label{mas47}
   &\phi_{1}\nabla_{X}\overline{\phi}Y+ \varphi_{1}\nabla_{X}\overline{\phi}Y+\phi_{1}\nabla_{Y}\overline{\phi}X+ \varphi_{1}\nabla_{Y}\overline{\phi}X\nonumber\\
   +\phi_{2}h&(X,\overline{\phi}Y)+\varphi_{2}h(X,\overline{\phi}Y)+\phi_{2}h(Y,\overline{\phi}X)+\varphi_{2}h(Y,\overline{\phi}X)\nonumber\\
   &+\nabla_{X}Y+\nabla_{Y}X+2h(X,Y)=0,\;\;\forall\,X,Y\in\Gamma(D).
  \end{align}
  Considering the tangential components of (\ref{mas47}) we get 
  \begin{align}\label{mas48}
   \phi_{1}\nabla_{X}&\overline{\phi}Y+\phi_{1}\nabla_{Y}\overline{\phi}X+\phi_{2}h(X,\overline{\phi}Y)+\phi_{2}h(Y,\overline{\phi}X)\nonumber\\
   &+\nabla_{X}Y+\nabla_{Y}X=0,\;\;\forall\,X,Y\in\Gamma(D). 
  \end{align}
 Since $D$ is nearly auto-parallel, (\ref{mas48}) leads to 
 \begin{equation}\nonumber
  \phi_{2}h(X,\overline{\phi}Y)+\phi_{2}h(Y,\overline{\phi}X)=0,
 \end{equation}
from which our assertion follows. Hence,  the proof is complete.
\end{proof}
In the similar way, we have the following.
\begin{theorem}
  Let $(M,g)$ be a co-screen QGCR-lightlike submanifold of an indefinite nearly $\mu$-Sasakian manifold $\overline{M}$. If  $\overline{D}$ is nearly auto-parallel, then $A_{\varphi_{1}X}Y+A_{\overline{\phi}Y}X$ has no component in $D$ for all $X,Y\in\Gamma(\overline{D})$.
\end{theorem}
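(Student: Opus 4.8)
The plan is to mirror the argument of Theorem \ref{theorem3}, with the roles of the tangential and transversal contact projections interchanged. First I would apply $\overline{\phi}$ to the nearly $\mu$-Sasakian identity (\ref{eqz}) evaluated on $X,Y\in\Gamma(\overline{D})$. Since $\xi\in\Gamma(S(TM^{\perp}))$ in the co-screen case, every tangent vector satisfies $\eta(\cdot)=0$, so the right-hand side of (\ref{eqz}) collapses to $2\mu\overline{g}(X,Y)\xi$; applying $\overline{\phi}$ and using $\overline{\phi}\xi=0$ kills the $\mu$-term entirely. Expanding $(\overline{\nabla}_{X}\overline{\phi})Y=\overline{\nabla}_{X}\overline{\phi}Y-\overline{\phi}\,\overline{\nabla}_{X}Y$, invoking $\overline{\phi}^{2}=-\mathbb{I}+\eta\otimes\xi$, and cancelling the two resulting $\xi$-terms by Lemma \ref{lemma5} then yields the clean identity
\begin{equation}\nonumber
\overline{\phi}\,\overline{\nabla}_{X}\overline{\phi}Y+\overline{\phi}\,\overline{\nabla}_{Y}\overline{\phi}X+\overline{\nabla}_{X}Y+\overline{\nabla}_{Y}X=0.
\end{equation}

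Next I would unpack this identity with the Gauss--Weingarten equations (\ref{eq11})--(\ref{eq32}) and the decompositions (\ref{mas29}). The essential difference from Theorem \ref{theorem3} is that for $X,Y\in\Gamma(\overline{D})$ the images $\overline{\phi}X,\overline{\phi}Y$ lie in $\overline{\phi}\,\overline{D}=\mathcal{S}\perp\mathcal{L}\subset\mathrm{tr}(TM)$, so each $\overline{\nabla}_{X}\overline{\phi}Y$ is now governed by the \emph{Weingarten} formula and produces $-A_{\overline{\phi}Y}X+\nabla^{t}_{X}\overline{\phi}Y$. Splitting $\overline{\phi}$ of the shape-operator part through $\{\phi_{1},\varphi_{1}\}$ and $\overline{\phi}$ of the transversal part through $\{\phi_{2},\varphi_{2}\}$, and writing $\overline{\nabla}_{X}Y+\overline{\nabla}_{Y}X=\nabla_{X}Y+\nabla_{Y}X+2h(X,Y)$ by Gauss, I would collect the tangential components to obtain
\begin{equation}\nonumber
-\phi_{1}\big(A_{\overline{\phi}Y}X+A_{\overline{\phi}X}Y\big)+\phi_{2}\big(\nabla^{t}_{X}\overline{\phi}Y+\nabla^{t}_{Y}\overline{\phi}X\big)+\nabla_{X}Y+\nabla_{Y}X=0,
\end{equation}
which is the exact analogue of (\ref{mas48}).

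The decisive step, and the part needing the most care, is a parity observation about the two partial contact operators in the co-screen setting: since $D$ is $\overline{\phi}$-invariant while $\overline{\phi}\,\overline{D}$ is transversal, $\phi_{1}$ maps $TM$ into $D$ and annihilates $\overline{D}$, whereas $\phi_{2}$ maps $\mathrm{tr}(TM)$ into $\overline{D}$. Hence $\phi_{1}(A_{\overline{\phi}Y}X+A_{\overline{\phi}X}Y)\in\Gamma(D)$, while $\phi_{2}(\nabla^{t}_{X}\overline{\phi}Y+\nabla^{t}_{Y}\overline{\phi}X)\in\Gamma(\overline{D})$ and, by the nearly auto-parallel hypothesis, $\nabla_{X}Y+\nabla_{Y}X\in\Gamma(\overline{D})$ as well. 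Reading the displayed identity as an equality between a section of $D$ and a section of $\overline{D}$ and using the directness $TM=D\oplus\overline{D}$ forces both sides to vanish; in particular $\phi_{1}(A_{\overline{\phi}Y}X+A_{\overline{\phi}X}Y)=0$. Finally, because $\overline{\phi}$ is injective on $D$ (its kernel $\mathbb{R}\xi$ meets $TM$ trivially here) and $\phi_{1}$ coincides with $\overline{\phi}$ on the $D$-part, this vanishing is equivalent to $A_{\overline{\phi}Y}X+A_{\overline{\phi}X}Y$ having no $D$-component; recalling $\varphi_{1}X=\overline{\phi}X$ on $\overline{D}$ gives precisely the claim for $A_{\varphi_{1}X}Y+A_{\overline{\phi}Y}X$. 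The main obstacle is really only keeping the bookkeeping of the four projections $\phi_{1},\varphi_{1},\phi_{2},\varphi_{2}$ straight so that the $D$-versus-$\overline{D}$ separation is clean; once that is in place, the conclusion is immediate.
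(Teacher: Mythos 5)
Your proposal is correct and takes essentially the same route as the paper, which omits this proof as being ``in the similar way'' to Theorem \ref{theorem3}: you apply $\overline{\phi}$ to (\ref{eqz}), use that $\eta$ vanishes on $TM$ in the co-screen case, invoke Lemma \ref{lemma5} to cancel the $\xi$-terms, replace the Gauss formula by the Weingarten formula since $\overline{\phi}\,\overline{D}=\mathcal{S}\perp\mathcal{L}$ is transversal, and then read off the $D$-component of the tangential equation exactly as the paper does with (\ref{mas48}). Your bookkeeping observations --- that $\phi_{1}$ maps $TM$ into $D$ and annihilates $\overline{D}$, that $\phi_{2}$ maps $\mathrm{tr}(TM)$ into $\overline{D}$, and that $\phi_{1}$ is injective on $D$ because $\ker\overline{\phi}=\mathbb{R}\xi$ meets $TM$ trivially when $\xi\in\Gamma(S(TM^{\perp}))$ --- correctly supply the steps the paper leaves implicit.
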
 
\section*{Acknowledgments}
This research work was partially supported by the African Institute for mathematical Sciences (AIMS) in S\'en\'egal, University of KwaZulu-Natal in South Africa and the Simon Foundation through the RGSM-Network project.

\end{document}